\tikzset{nimplies/.style={decoration={markings,mark= at position 0.5 with {\node[transform shape] (tempnode){\tiny/};}},postaction={decorate}}}
\tikzset{negated/.style={decoration={markings,mark= at position 0.5 with {\node[transform shape] (tempnode) {$\backslash$};}},postaction={decorate}}}
\newtheorem{theorem}{Theorem}[section]
\newtheorem*{theorem*}{Theorem}
\newtheorem{question}[theorem]{Question}
\newtheorem{proposition}[theorem]{Proposition}
\newtheorem{corollary}[theorem]{Corollary}
\newtheorem{fact}[theorem]{Fact}
\theoremstyle{definition}
\newtheorem{definition}[theorem]{Definition}
\newtheorem{example}[theorem]{Example}
\theoremstyle{remark}
\newtheorem{remark}[theorem]{Remark}
\numberwithin{equation}{section}
\DeclareMathOperator{\ssdtwop}{SSD2P}
\newcommand{\SSDTWOP}[1]{\ssdtwop_{#1}}
\DeclareMathOperator{\assdtwop}{1-ASSD2P}
\newcommand{\ASSDTWOP}[1]{\assdtwop_{#1}}
\DeclareMathOperator{\sdtwop}{SD2P}
\newcommand{\SDTWOP}[1]{\sdtwop_{#1}}
\DeclareMathOperator{\asdtwop}{1-ASD2P}
\newcommand{\ASDTWOP}[1]{\asdtwop_{#1}}
\DeclareMathOperator{\asq}{ASQ}
\newcommand{\ASQ}[1]{\asq_{#1}}
\DeclareMathOperator{\sq}{SQ}
\newcommand{\SQ}[1]{\sq_{#1}}
\DeclareMathOperator{\density}{d}
\DeclareMathOperator{\cellularity}{c}
\DeclareMathOperator{\cofinality}{cf}
\begin{document}
\title{On the transfinite symmetric strong diameter two property}

\author{\href{https://orcid.org/0000-0002-1500-8123}{Stefano Ciaci}}
\address{Institute of Mathematics and Statistics, University of Tartu, Narva mnt 18, 51009 Tartu, Estonia}
\email{stefano.ciaci@ut.ee}
\urladdr{\url{https://stefanociaci.science.blog/}}

\thanks{This work was supported by the Estonian Research Council grants (PSG487) and (PRG1501).}
\subjclass[2020]{46B04, 46B20, 54A25}
\keywords{Diameter two property, $c_0$ sum, Projective tensor product, Cardinal function}
\begin{abstract} 
    We study transfinite analogues of the \emph{symmetric strong diameter two property}. We investigate the stability of these properties under $c_0$, $\ell_\infty$ sums and under projective tensor products. Moreover, we characterize Banach spaces of the form $C_0(X)$, where $X$ is a T$_4$ locally compact space, which posses these transfinite properties via cardinal functions over $X$. As an application, we are able to produce a variety of examples of Banach spaces which enjoy or fail these properties.
\end{abstract}

\maketitle
\section{Introduction}\label{section: introduction}

Given an infinite-dimensional real Banach space $X$, its topological dual, its unit ball and its unit sphere are denoted by $X^*$, $B_X$ and $S_X$, respectively.

\begin{definition}\label{definition: SSD2P}
    A Banach space $X$ has the \emph{symmetric strong diameter two property} ($\ssdtwop$) if, and only if, for every $x^*_1,\ldots,x^*_n\in S_{X^*}$ and $\varepsilon>0$, there are $x_1,\ldots,x_n,y\in B_X$ such that $\|y\|\ge1-\varepsilon$, $x_i\pm y\in B_X$ and $x^*_i(x_i)\ge1-\varepsilon$ for all $1\le i\le n$.
\end{definition}

The $\ssdtwop$ was introduced in \cite{ANP2019_NewApplicationsOfExtremelyRegularFunctionSpaces}, but the original definition contains the additional requirement that $x^*_i(x_i\pm y)\ge1-\varepsilon$, which is redundant. Indeed, if we require that $x^*_i(x_i)\ge1-\varepsilon/2$, since $x_i\pm y\in B_X$, then $|x^*_i(y)|\le\varepsilon/2$ and, therefore, Definition~\ref{definition: SSD2P} is equivalent to the original one.

Examples of Banach spaces enjoying the $\ssdtwop$ include Lindenstrauss spaces, uniform algebras, almost square Banach spaces, Banach spaces with an infinite dimensional centralizer, somewhat regular subspaces of $C_0(X)$ spaces, where $X$ is an infinite locally compact Hausdorff space, and Müntz spaces (see \cite{HLLN2019_SymmetricStrongDiameterTwoProperty}).

In \cite{ACLLR2022_TransfiniteAlmostSquareBanachSpace} transfinite analogues of the $\ssdtwop$ were defined, but, before recalling these definitions, let us introduce some notation. Given $r\in\mathbb (0,1)$, $B\subset B_X$ and $A\subset S_{X^*}$, we say that $B$ \emph{$r$-norms} $A$ if, for every $x^*\in A$, there is $x\in B$ such that $x^*(x)\ge r$. In addition, we say that $B$ \emph{norms} $A$ if it $r$-norms it for all $r\in (0,1)$.

\begin{definition}\label{definition: k-SSD2P}\cite[Definition~5.3]{ACLLR2022_TransfiniteAlmostSquareBanachSpace}
    Let $X$ be a Banach space and $\kappa$ an infinite cardinal.
    \begin{itemize}
        \item[(i)] $X$ has the \emph{$\SSDTWOP{\kappa}$} if, for every set $A\subset S_{X^*}$ of cardinality $<\kappa$ and $\varepsilon>0$, there are $B\subset B_X$, which $(1-\varepsilon)$-norms $A$, and $y\in B_X$ satisfying $B\pm y\subset B_X$ with $\|y\|\ge1-\varepsilon$.
        \item[(ii)] $X$ has the \emph{$\ASSDTWOP{\kappa}$} if, for every set $A\subset S_{X^*}$ of cardinality $<\kappa$, there are $B\subset S_X$, which norms $A$, and $y\in S_X$ satisfying $B\pm y\subset S_X$. 
    \end{itemize}   
\end{definition}

In the following, we aim to investigate these transfinite extensions of the $\ssdtwop$ and, in particular, to show differences in their behaviour when compared to the regular $\ssdtwop$.

Now, let us also recall the transfinite extensions of almost squareness and the strong diameter two property.

\begin{definition}\label{definition: k-ASQ}\cite[Definition~2.1]{ACLLR2022_TransfiniteAlmostSquareBanachSpace}
    Let $X$ be a Banach space and $\kappa$ a cardinal.
    \begin{itemize}
        \item[(i)] $X$ is \emph{$\ASQ{\kappa}$} if, for every set $A\subset S_X$ of cardinality $<\kappa$ and $\varepsilon>0$, there exists $y\in S_X$ such that $\|x\pm y\|\le1+\varepsilon$ holds for all $x\in A$.
        \item[(ii)] $X$ is \emph{$\SQ{\kappa}$} if, for every set $A\subset S_X$ of cardinality $<\kappa$, there exists $y\in S_X$ such that $\|x\pm y\|\le1$ holds for all $x\in A$.
    \end{itemize}
\end{definition}

\begin{definition}\label{definition: k-SD2P}\cite[Definitions~2.11 and 2.12]{CLL2022_AttainingStrongDiameterTwoPropertyForInfiniteCardinals}
    Let $X$ be a Banach space and $\kappa$ an infinite cardinal.
    \begin{itemize}
        \item[(i)] $X$ has the $\SDTWOP{\kappa}$ if, for every set $A\subset S_{X^*}$ of cardinality $<\kappa$ and $\varepsilon>0$, there are $B\subset B_X$, which $(1-\varepsilon)$-norms $A$, and $x^*\in B_{X^*}$ satisfying $x^*(x)\ge1-\varepsilon$ for all $x\in B$.
        \item[(ii)] $X$ has the $\ASDTWOP{\kappa}$ if, for every set $A\subset S_{X^*}$ of cardinality $<\kappa$, there are $B\subset S_X$, which norms $A$, and $x^*\in S_{X^*}$ satisfying $x^*(x)=1$ for all $x\in B$.
    \end{itemize}   
\end{definition}

It is clear that every $\ASQ{\kappa}$ ($\SQ{\kappa}$, respectively) Banach space enjoys the $\SSDTWOP{\kappa}$ ($\ASSDTWOP{\kappa}$, respectively). Moreover, it was shown in  \cite[Proposition~5.4]{ACLLR2022_TransfiniteAlmostSquareBanachSpace} that the $\SSDTWOP{\kappa}$ ($\ASSDTWOP{\kappa}$, respectively) implies the $\SDTWOP{\kappa}$ ($\ASDTWOP{\kappa}$, respectively). To sum up, the following implications hold true.

\begin{figure}[h]
\centering
\begin{tikzpicture}
\path
(0,0) node (1) {$\ASQ{\kappa}$}
(0,1.2) node (2) {$\SQ{\kappa}$}
(2.8,0) node (3) {$\SSDTWOP{\kappa}$}
(2.8,1.2) node (4) {$\ASSDTWOP{\kappa}$}
(5.6,0) node (5) {$\SDTWOP{\kappa}$}
(5.6,1.2) node (6) {$\ASDTWOP{\kappa}$};
\draw[-implies,double equal sign distance](2.south)--(1.north);
\draw[-implies,double equal sign distance](4.south)--(3.north);
\draw[-implies,double equal sign distance](6.south)--(5.north);
\draw[-implies,double equal sign distance](2.east)--(4.west);
\draw[-implies,double equal sign distance](1.east)--(3.west);
\draw[-implies,double equal sign distance](4.east)--(6.west);
\draw[-implies,double equal sign distance](3.east)--(5.west);
\end{tikzpicture}
\end{figure}

\subsection{Content of the paper}\label{subsection: content of the paper}\hfill\\

In Section~\ref{section: stability results}, we study the stability of the transfinite $\ssdtwop$ with respect to operations between Banach spaces.

We provide a complete description concerning $c_0$ and $\ell_\infty$ sums (see Theorems~\ref{theorem: c_0 sums} and \ref{theorem: ell_infty sums}), which informally state that these sums of Banach spaces enjoy the $\SSDTWOP{\kappa}$ if, and only if, we can always find one component which satisfies a property which arbitrarily well approximates the $\SSDTWOP{\kappa}$. Thanks to these characterizations, we show that, for example, the Banach spaces $c_0(\mathbb N,\ell_n(\kappa))$ and $\ell_\infty(\mathbb N,\ell_n(\kappa))$ enjoy the $\SSDTWOP{\kappa}$ (see Example~\ref{example: c_0(ell_n) SSD2P}).

We also investigate the behaviour of the $\SSDTWOP{\kappa}$ under projective tensor products. Namely, we prove that the Banach space $X\hat{\otimes}_\pi Y$ has the $\SSDTWOP{\kappa}$, whenever $X$ and $Y$ enjoy the property.

We conclude this section by studying the difference in behaviour of the transfinite $\ssdtwop$ compared to the finite $\ssdtwop$. In particular, we prove that, for the transfinite case, it is not possible to replace the functionals with relatively weakly open sets in Definition~\ref{definition: k-SSD2P}, even though it is possible for the traditional $\ssdtwop$ (see Fact~\ref{fact: equivalent characterization SSD2P}~(ii)). Moreover, we prove that an equivalent internal description of the $\ssdtwop$ (see Fact~\ref{fact: equivalent characterization SSD2P}~(iii)) also fails in the transfinite case.

Section~\ref{section: C_0(X) spaces} is dedicated to extend the class of known examples which possess the transfinite $\ssdtwop$. To this aim, we search for a description in the class of $C_0(X)$ spaces, whenever $X$ is a T$_4$ locally compact space. The main result of this section states that the Banach space $C_0(X)$ fails the $\SSDTWOP{\kappa}$, where $\kappa$ is the successor cardinal of the density character of $X$, but it enjoys the $\ASSDTWOP{\mu}$, where $\mu$ is the cellularity of $X$ (see Theorem~\ref{theorem: C_0(X) SSD2P}).

Thanks to this result, new examples are provided, e.g. $C[0,1]$ and $\ell_\infty$ fail the $\SSDTWOP{\aleph_1}$, $C(\beta\mathbb N\setminus\mathbb N)$ enjoys the $\ASSDTWOP{2^{\aleph_0}}$ and $\ell_\infty(\kappa)$ has the $\ASSDTWOP{\kappa}$, whenever $\kappa>\aleph_0$.
\subsection{Notation}\label{subsection: notation}\hfill\\

Given a sequence of Banach spaces $(X_n)$ we define
\begin{equation*}
    \ell_\infty(\mathbb N,X_n):=\left\{x\in\prod_{n=1}^\infty X_n:(\forall n\in\mathbb N)\hspace{0.1cm}x(n)\in X_n\text{ and }\sup_{n}\|x(n)\|<\infty\right\}
\end{equation*}
endowed with the usual supremum norm. Moreover, we set
\begin{equation*}
    c_0(\mathbb N,X_n):=\left\{x\in \ell_\infty(\mathbb N,X_n):\lim_n\|x(n)\|=0\right\}.
\end{equation*}

Eventually, given a cardinal $\kappa$, we call $\cofinality(\kappa)$ its cofinality and $\kappa^+$ its successor cardinal.
\section{Stability results}\label{section: stability results}

In this section we investigate the behaviour of the transfinite $\ssdtwop$ with respect to operations between Banach spaces.
\subsection{Direct sums}\label{subsection: direct sums}

Given a sequence of Banach spaces $(X_n)$, it is known that the $c_0$ sum $c_0(\mathbb N,X_n)$ is always $\asq$ \cite[Example~3.1]{ALL2016_AlmostSquareBanachSpaces} and, therefore, has the $\ssdtwop$. Moreover, it was proved in \cite[Proposition~4.3]{ACLLR2022_TransfiniteAlmostSquareBanachSpace} that the $c_0$ sum $c_0(\mathscr A,X_\alpha)$ of a family of Banach spaces $\{X_\alpha:\alpha\in\mathscr A\}$ is $\SQ{|\mathscr A|}$ and thus has the $\ASSDTWOP{|\mathscr A|}$, whenever $|\mathscr A|>\aleph_0$. For these reasons, in the following we will focus only on countable $c_0$ sums with respect to the $\SSDTWOP{\kappa}$, for $\kappa>\aleph_0$.

\begin{theorem}\label{theorem: c_0 sums}
    Let $(X_n)$ be a sequence of Banach spaces and $\kappa>\aleph_0$. If, for every $r\in[0,1)$, there is $n\in\mathbb N$ such that, for every set $A\subset S_{X^*_n}$ of cardinality $<\kappa$, there exist $B\subset B_{X_n}$ and $y\in B_{X_n}$ such that $\|y\|\ge r$, $B$ $r$-norms $A$ and $B\pm y\subset B_{X_n}$, then $c_0(\mathbb N,X_n)$ enjoys the $\SSDTWOP{\kappa}$. If in addition $\cofinality(\kappa)>\aleph_0$, then the vice-versa also holds.
\end{theorem}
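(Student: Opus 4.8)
The plan is to prove the two implications separately, writing $X=c_0(\mathbb N,X_n)$ and identifying $X^*$ with $\ell_1(\mathbb N,X_n^*)$, so that each $x^*\in X^*$ is a sequence $(x^*(k))_k$ with $\sum_k\|x^*(k)\|=\|x^*\|$ and $x^*(x)=\sum_k x^*(k)(x(k))$. For the forward direction, given $A\subset S_{X^*}$ with $|A|<\kappa$ and $\varepsilon>0$, I would first fix $\delta\in(0,\varepsilon]$ with $(1-\delta)^2\ge1-\varepsilon$, apply the hypothesis with $r:=1-\delta$ to obtain an index $n_0$, and pass to the normalized $n_0$-coordinates $A_{n_0}:=\{x^*(n_0)/\|x^*(n_0)\|:x^*\in A,\ x^*(n_0)\ne0\}\subset S_{X_{n_0}^*}$, a set of cardinality $<\kappa$. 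The hypothesis then furnishes $B_{n_0}\subset B_{X_{n_0}}$ that $(1-\delta)$-norms $A_{n_0}$ and $y_0\in B_{X_{n_0}}$ with $\|y_0\|\ge1-\delta$ and $B_{n_0}\pm y_0\subset B_{X_{n_0}}$. The decisive design choice is to let $y$ be the element of $X$ supported on the single coordinate $n_0$ with value $y_0$, so that $\|y\|=\|y_0\|\ge1-\varepsilon$.

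For each $x^*\in A$ I build a norming vector $b_{x^*}\in B_X$ by choosing a finite $F\ni n_0$ with $\sum_{k\notin F}\|x^*(k)\|<\delta$, taking $z_k\in B_{X_k}$ with $x^*(k)(z_k)\ge(1-\delta)\|x^*(k)\|$ for $k\in F\setminus\{n_0\}$, setting the $n_0$-coordinate equal to an element of $B_{n_0}$ that $(1-\delta)$-norms $x^*(n_0)/\|x^*(n_0)\|$ (or $0$ when $x^*(n_0)=0$), and putting all remaining coordinates to $0$. Concentrating $y$ on $n_0$ makes $b_{x^*}\pm y$ agree with $b_{x^*}$ off $n_0$ and equal $b_{x^*}(n_0)\pm y_0\in B_{X_{n_0}}$ on $n_0$, so $b_{x^*}\pm y\in B_X$. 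Since $r=1-\delta$ matches the near-norming factor used on the other coordinates, the estimate collapses to $x^*(b_{x^*})\ge(1-\delta)\sum_{k\in F}\|x^*(k)\|\ge(1-\delta)^2\ge1-\varepsilon$, and $B:=\{b_{x^*}:x^*\in A\}$ witnesses the $\SSDTWOP{\kappa}$.

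For the converse I would argue by contraposition, assuming $\cofinality(\kappa)>\aleph_0$. If the stated condition fails for some $r_0\in[0,1)$, then for each $n$ there is a witnessing set $A_n\subset S_{X_n^*}$ with $|A_n|<\kappa$ admitting no suitable pair in $X_n$. Assemble these into $A:=\bigcup_n\{a^*_n:a^*\in A_n\}\subset S_{X^*}$, where $a^*_n$ denotes the norm-one functional $x\mapsto a^*(x(n))$; this is exactly where the cofinality hypothesis is needed, since $|A|\le\sum_n|A_n|<\kappa$ precisely because a countable supremum of cardinals below $\kappa$ stays below $\kappa$ when $\cofinality(\kappa)>\aleph_0$. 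Applying the $\SSDTWOP{\kappa}$ to $A$ with any $\varepsilon\le1-r_0$ yields $B\subset B_X$ that $(1-\varepsilon)$-norms $A$ and $y\in B_X$ with $\|y\|\ge1-\varepsilon$ and $B\pm y\subset B_X$. As $y\in c_0(\mathbb N,X_n)$, its coordinate norms tend to $0$, so $\|y\|_\infty$ is attained at some $m$ with $\|y(m)\|\ge1-\varepsilon\ge r_0$. Restricting to coordinate $m$ via $B':=\{b(m):b\in B\}$ and $y':=y(m)$, the norming of the functionals $a^*_m$ ($a^*\in A_m$) shows $B'$ $r_0$-norms $A_m$, the inclusion $B\pm y\subset B_X$ forces $b(m)\pm y(m)\in B_{X_m}$, and $\|y'\|\ge r_0$; this contradicts the choice of $A_m$.

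I expect the main obstacle to be conceptual rather than computational in the forward direction: recognizing that the whole perturbation can be placed on the single index $n_0$ supplied by the hypothesis while every other coordinate is absorbed into a finitely supported near-norming vector, and that choosing $r=1-\delta$ makes the $n_0$-term fall in line with the remaining coordinates so the norming estimate does not degrade. In the converse the only delicate point is the cardinal arithmetic guaranteeing $|A|<\kappa$, which is the sole reason the assumption $\cofinality(\kappa)>\aleph_0$ is imposed; the extraction of a single coordinate $m$ from $y$ is immediate because $y$ lies in $c_0$ and its supremum norm is therefore attained.
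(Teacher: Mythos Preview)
Your proposal is correct and follows essentially the same route as the paper: in the forward direction you place $y$ on the single coordinate $n_0$ supplied by the hypothesis and build each norming vector as a finitely supported element whose $n_0$-coordinate is drawn from $B_{n_0}$, and in the converse you assemble the bad sets $A_n$ into a single $A\subset S_{X^*}$ (using $\cofinality(\kappa)>\aleph_0$ to keep $|A|<\kappa$) and read off a good coordinate from $y$. The only differences are cosmetic---your $(1-\delta)^2\ge1-\varepsilon$ in place of the paper's $(1-\varepsilon)^{1/2}$, a general finite $F$ instead of an initial segment $\{1,\dots,n_\alpha\}$, and your explicit contrapositive phrasing of the converse.
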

\begin{proof}
    Fix a set $A\subset S_{\ell_1(\mathbb N,X^*_n)}$ of cardinality $<\kappa$ and $\varepsilon>0$. Let $\{x^*_\alpha:\alpha\in\mathscr A\}$ be an enumeration of $A$ and find $m\in\mathbb N$ as in the statement for $r=1-\varepsilon$.

    By assumption there are $y^m\in B_{X_m}$ and $x^m_\alpha\in B_{X_m}$ such that $\|y^m\|\ge1-\varepsilon$, $x^m_\alpha\pm y^m\in B_{X_m}$ and $x^*_\alpha(m)(x^m_\alpha)\ge(1-\varepsilon)^\frac{1}{2}\|x^*_\alpha(m)\|$ hold for every $\alpha\in\mathscr A$.

    For each $m\not=n\in\mathbb N$ and $\alpha\in\mathscr A$, find $x^n_\alpha\in B_{X_n}$ satisfying $x^*_\alpha(n)(x^n_\alpha)\ge(1-\varepsilon)^\frac{1}{2}\|x^*_\alpha(n)\|$. Moreover, since $x^*_\alpha\in\ell_1(\mathbb N,X_n^*)$, there exists $n_\alpha\ge m$ such that 
    \begin{equation*}
        \sum_{1\le n\le n_\alpha}\|x^*_\alpha(n)\|\ge(1-\varepsilon)^\frac{1}{2}.
    \end{equation*}
    
    Now call
    \begin{equation*}
        x_\alpha:=\sum_{1\le n\le n_\alpha}x^n_\alpha e_n\in B_{c_0(\mathbb N,X_n)}
    \end{equation*}
    and $y:=y^me_m\in B_{c_0(\mathbb N,X_n)}$.

    Notice that
    \begin{equation*}
        x^*_\alpha(x_\alpha)=\sum_{1\le n\le n_\alpha}x^*_\alpha(n)(x^n_\alpha)\ge(1-\varepsilon)^\frac{1}{2}\sum_{1\le n\le n_\alpha}\|x^*_\alpha(n)\|\ge1-\varepsilon,
    \end{equation*}
    which means that the set $\{x_\alpha:\alpha\in\mathscr A\}$ $(1-\varepsilon)$-norms $A$.
    
    On the other hand, $\|y\|=\|y^m\|\ge1-\varepsilon$ and
    \begin{equation*}
        \|x_\alpha\pm y\|=\max\left\{\|x_\alpha^m\pm y^m\|,\sup_{1\le n\not=m\le n_\alpha}\|x_\alpha^n\|\right\}\le1
    \end{equation*}
    holds for all $\alpha\in\mathscr A$. Therefore, $c_0(\mathbb N,X_n)$ enjoys the $\SSDTWOP{\kappa}$.

    For the vice-versa, fix $\varepsilon>0$ and, for every $n\in\mathbb N$, $A_n\subset S_{X_n^*}$ of cardinality $<\kappa$. Call
    \begin{equation*}
        A:=\{x^*e_n:n\in\mathbb N\text{ and }x^*\in A_n\}\subset S_{\ell_1(\mathbb N,X_n^*)}
    \end{equation*}
    and notice that $|A|\le\aleph_0\cdot\sup|A_n|<\kappa$, because $\cofinality(\kappa)>\aleph_0$. Therefore, there exist a set $B\subset B_{c_0(\mathbb N,X_n)}$, which $(1-\varepsilon)$-norms $A$, and $y\in B_{c_0(\mathbb N,X_n)}$ such that $\|y\|\ge1-\varepsilon$ and $B\pm y\subset B_{c_0(\mathbb N,X_n)}$.

    Since $\|y\|\ge1-\varepsilon$, there exists $n\in\mathbb N$ satisfying $\|y(n)\|\ge1-\varepsilon$. Moreover, from the fact that, in particular, $B$ $(1-\varepsilon)$-norms the set $\{x^*\delta_n:x^*\in A_n\}$, we deduce that the set $B_n:=\{x(n):x\in B\}\subset B_{X_n}$ $(1-\varepsilon)$-norms $A_n$.
    
    Eventually, notice that, given $x(n)\in B_n$,
    \begin{equation*}
        \|x(n)\pm y(n)\|\le\|x\pm y\|\le1,
    \end{equation*}
    which concludes the proof.
\end{proof}

Notice that the same proof can be adjusted to $\ell_\infty$ sums too. As a matter of fact, it is not needed to find $n_\alpha$ as in the proof of Theorem~\ref{theorem: c_0 sums} and one can define
\begin{equation*}
    x_\alpha:=\sum_{n=1}^\infty x_\alpha^ne_n\in B_{\ell_\infty(\mathbb N,X_n)}.
\end{equation*}

By doing so, the following theorem is easily proved, up to a few minor changes.

\begin{theorem}\label{theorem: ell_infty sums}
    Let $\{X_\alpha:\alpha\in\mathscr A\}$ be a family of Banach spaces and $\kappa>\aleph_0$. If, for every $r\in[0,1)$, there is $\alpha\in\mathscr A$ such that, for every set $A\subset S_{X^*_\alpha}$ of cardinality $<\kappa$, there exist $B\subset B_{X_\alpha}$ and $y\in B_{X_\alpha}$ such that $\|y\|\ge r$, $B$ $r$-norms $A$ and $B\pm y\subset B_{X_\alpha}$, then $\ell_\infty(\mathscr A,X_\alpha)$ enjoys the $\SSDTWOP{\kappa}$. If in addition $\cofinality(\kappa)>|\mathscr A|$, then the vice-versa also holds.
\end{theorem}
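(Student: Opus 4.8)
The plan is to adapt the proof of Theorem~\ref{theorem: c_0 sums} with the minimal modifications dictated by the remark preceding the statement. The underlying structure is identical: the dual of $\ell_\infty(\mathscr A,X_\alpha)$ contains $\ell_1(\mathscr A,X_\alpha^*)$ as a norming subspace, so it suffices to handle functionals of the form $x^*_\alpha\in S_{\ell_1(\mathscr A,X_\alpha^*)}$. First I would fix a set $A\subset S_{\ell_1(\mathscr A,X_\alpha^*)}$ of cardinality $<\kappa$ with an enumeration $\{x^*_\beta:\beta\in\mathscr B\}$, fix $\varepsilon>0$, and invoke the hypothesis with $r=1-\varepsilon$ to obtain an index $\gamma\in\mathscr A$ together with, for every $\beta$, elements $y^\gamma\in B_{X_\gamma}$ and $x^\gamma_\beta\in B_{X_\gamma}$ with $\|y^\gamma\|\ge1-\varepsilon$, $x^\gamma_\beta\pm y^\gamma\in B_{X_\gamma}$, and $x^*_\beta(\gamma)(x^\gamma_\beta)\ge(1-\varepsilon)^{1/2}\|x^*_\beta(\gamma)\|$.

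The essential simplification, as the remark notes, is that no truncation index $n_\beta$ is required: since we are in an $\ell_\infty$ sum rather than a $c_0$ sum, I can set $x_\beta:=\sum_{\alpha\in\mathscr A}x^\alpha_\beta e_\alpha$, where for each $\alpha\ne\gamma$ I choose $x^\alpha_\beta\in B_{X_\alpha}$ with $x^*_\beta(\alpha)(x^\alpha_\beta)\ge(1-\varepsilon)^{1/2}\|x^*_\beta(\alpha)\|$. This $x_\beta$ lies in $B_{\ell_\infty(\mathscr A,X_\alpha)}$ because each coordinate has norm at most one. Setting $y:=y^\gamma e_\gamma$, the norming estimate
\begin{equation*}
    x^*_\beta(x_\beta)=\sum_{\alpha\in\mathscr A}x^*_\beta(\alpha)(x^\alpha_\beta)\ge(1-\varepsilon)^{1/2}\sum_{\alpha\in\mathscr A}\|x^*_\beta(\alpha)\|=(1-\varepsilon)^{1/2}\ge1-\varepsilon
\end{equation*}
now holds with the full sum (using $\|x^*_\beta\|_{\ell_1}=1$), so the $(1-\varepsilon)^{1/2}$ slack serving to absorb the truncation in the $c_0$ case is here no longer needed but does no harm. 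The displacement estimate is unchanged: $\|y\|=\|y^\gamma\|\ge1-\varepsilon$, and $\|x_\beta\pm y\|=\max\{\|x^\gamma_\beta\pm y^\gamma\|,\sup_{\alpha\ne\gamma}\|x^\alpha_\beta\|\}\le1$, since perturbation by $y$ affects only the $\gamma$-coordinate. This gives the forward direction.

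For the converse, I would assume $\cofinality(\kappa)>|\mathscr A|$ and proceed exactly as before: given $\varepsilon>0$ and sets $A_\alpha\subset S_{X_\alpha^*}$ of cardinality $<\kappa$ for each $\alpha$, form $A:=\{x^*e_\alpha:\alpha\in\mathscr A,\ x^*\in A_\alpha\}\subset S_{\ell_1(\mathscr A,X_\alpha^*)}$. The cardinality bound $|A|\le|\mathscr A|\cdot\sup_\alpha|A_\alpha|<\kappa$ is where the cofinality hypothesis is used, now with $|\mathscr A|$ in place of $\aleph_0$: since each $|A_\alpha|<\kappa$ and $\cofinality(\kappa)>|\mathscr A|$, the supremum of the $|A_\alpha|$ stays below $\kappa$, and multiplying by $|\mathscr A|<\kappa$ keeps the product below $\kappa$. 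Applying the $\SSDTWOP{\kappa}$ to $A$ yields $B$ and $y$; from $\|y\|\ge1-\varepsilon$ I extract an $\alpha$ with $\|y(\alpha)\|\ge1-\varepsilon$, and the coordinate restriction $B_\alpha:=\{x(\alpha):x\in B\}$ inherits both the $(1-\varepsilon)$-norming of $A_\alpha$ and the property $x(\alpha)\pm y(\alpha)\in B_{X_\alpha}$, precisely as in Theorem~\ref{theorem: c_0 sums}.

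The only genuine point requiring care — and thus the step I would flag as the main obstacle — is the cardinal arithmetic in the converse. One must verify that $\cofinality(\kappa)>|\mathscr A|$ guarantees $|\mathscr A|\cdot\sup_\alpha|A_\alpha|<\kappa$; the cofinality condition is exactly what prevents the supremum over the $|\mathscr A|$-indexed family from reaching $\kappa$, and the factor $|\mathscr A|$ is likewise dominated. Everything else is a routine transcription, replacing $\mathbb N$ by $\mathscr A$, the index $m$ by $\gamma$, and dropping the truncation machinery, so I would present the argument briefly as "the same proof, mutatis mutandis" while making the cardinality estimate explicit.
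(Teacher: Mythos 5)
Your construction for the forward direction and your converse follow exactly the adaptation the paper has in mind (drop the truncation index, patch only one coordinate, use $\cofinality(\kappa)>|\mathscr A|$ for the cardinality estimate), but there is a genuine gap, and it sits in your opening reduction: the claim that, since $\ell_1(\mathscr A,X_\alpha^*)$ is a norming subspace of $(\ell_\infty(\mathscr A,X_\alpha))^*$, ``it suffices to handle functionals of the form $x^*\in S_{\ell_1(\mathscr A,X_\alpha^*)}$.'' The $\SSDTWOP{\kappa}$ quantifies over arbitrary subsets of $S_{X^*}$, and when $\mathscr A$ is infinite the dual of $\ell_\infty(\mathscr A,X_\alpha)$ is strictly larger than $\ell_1(\mathscr A,X_\alpha^*)$: since $c_0(\mathscr A,X_\alpha)$ is a proper closed subspace, Hahn--Banach provides nonzero functionals vanishing on it (for scalar $X_\alpha$ these include Banach-limit-type functionals). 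There is no general principle allowing one to verify an SSD2P-type property only on a $1$-norming subspace: a vector chosen to nearly norm an $\ell_1$-functional gives no control whatsoever on its value under such a singular functional, so your coordinatewise-built $x_\beta$'s simply never see the singular part of a general $F\in S_{(\ell_\infty(\mathscr A,X_\alpha))^*}$. (For finite $\mathscr A$, as in Corollary~\ref{corollary: direct sum}, the issue is vacuous; and, to be fair, the paper's own two-line adaptation glosses over the same point, since the $c_0$-proof it invokes starts from $A\subset S_{\ell_1(\mathbb N,X_n^*)}$, which is the whole dual sphere only in the $c_0$ case.)

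The gap is fixable, and the fix in fact simplifies the construction by removing all coordinatewise norming. Let $\gamma$ be the index given by the hypothesis for $r:=1-\varepsilon/2$. For each $F_\beta\in S_{(\ell_\infty(\mathscr A,X_\alpha))^*}$, $\beta\in\mathscr B$ with $|\mathscr B|<\kappa$, consider only its $\gamma$-th coordinate functional $f_\beta\in X_\gamma^*$, $f_\beta(v):=F_\beta(ve_\gamma)$, which satisfies $\|f_\beta\|\le1$. Applying the hypothesis to the set of normalized nonzero $f_\beta$'s yields $y^\gamma\in B_{X_\gamma}$ with $\|y^\gamma\|\ge r$ and vectors $x^\gamma_\beta\in B_{X_\gamma}$ with $x^\gamma_\beta\pm y^\gamma\in B_{X_\gamma}$ and $f_\beta(x^\gamma_\beta)\ge r\|f_\beta\|$ (take $x^\gamma_\beta:=0$ when $f_\beta=0$). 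Now pick any $w_\beta\in B_{\ell_\infty(\mathscr A,X_\alpha)}$ with $F_\beta(w_\beta)\ge1-\varepsilon/2$ and replace only its $\gamma$-th coordinate, setting $x_\beta:=w_\beta+(x^\gamma_\beta-w_\beta(\gamma))e_\gamma$ and $y:=y^\gamma e_\gamma$. Then
\begin{equation*}
    F_\beta(x_\beta)=F_\beta(w_\beta)+f_\beta(x^\gamma_\beta)-f_\beta(w_\beta(\gamma))\ge1-\varepsilon/2-(1-r)\|f_\beta\|\ge1-\varepsilon,
\end{equation*}
while $x_\beta\pm y$ has $\gamma$-th coordinate $x^\gamma_\beta\pm y^\gamma\in B_{X_\gamma}$ and all other coordinates equal to $w_\beta(\alpha)\in B_{X_\alpha}$, hence $x_\beta\pm y\in B_{\ell_\infty(\mathscr A,X_\alpha)}$, and $\|y\|\ge1-\varepsilon$. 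One last cosmetic point in your converse: unlike in a $c_0$-sum, the supremum $\|y\|=\sup_\alpha\|y(\alpha)\|$ need not be attained in an $\ell_\infty$-sum, so you should extract $\alpha$ with, say, $\|y(\alpha)\|\ge1-2\varepsilon$; the rest of the converse, including the cardinal arithmetic, is correct.
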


\begin{corollary}\label{corollary: direct sum}
    Let $X$ and $Y$ be Banach spaces and $\kappa>\aleph_0$. Either $X$ or $Y$ enjoy the $\SSDTWOP{\kappa}$ if, and only if, $X\oplus_\infty Y$ enjoys the $\SSDTWOP{\kappa}$.
\end{corollary}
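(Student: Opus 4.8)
The plan is to read the statement as a special case of Theorem~\ref{theorem: ell_infty sums} for the two-element index family, writing $X\oplus_\infty Y=\ell_\infty(\mathscr A,X_\alpha)$ with $\mathscr A=\{1,2\}$, $X_1=X$, $X_2=Y$. Since $\kappa>\aleph_0$ gives $\cofinality(\kappa)\ge\aleph_0>2=|\mathscr A|$, the extra hypothesis $\cofinality(\kappa)>|\mathscr A|$ of that theorem is automatic, so both its forward implication and its vice-versa are at my disposal. The only genuine content left is to convert the technical ``approximate'' clause occurring there into the clean assertion that one of the two summands already has the $\SSDTWOP{\kappa}$.

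To this end, for a component $X_\alpha$ and $r\in[0,1)$ let $P_\alpha(r)$ abbreviate the clause in Theorem~\ref{theorem: ell_infty sums}: for every $A\subset S_{X_\alpha^*}$ of cardinality $<\kappa$ there are $B\subset B_{X_\alpha}$ and $y\in B_{X_\alpha}$ with $\|y\|\ge r$, $B$ $r$-norming $A$, and $B\pm y\subset B_{X_\alpha}$. The first step is to record the elementary equivalence, obtained by the substitution $r=1-\varepsilon$ and a comparison with Definition~\ref{definition: k-SSD2P}~(i), that \emph{$X_\alpha$ has the $\SSDTWOP{\kappa}$ if and only if $P_\alpha(r)$ holds for every $r\in[0,1)$}.

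For the ``only if'' direction I would assume, without loss of generality, that $X$ has the $\SSDTWOP{\kappa}$. By the equivalence above, $P_X(r)$ then holds for every $r\in[0,1)$, so for each $r$ the single component $X$ witnesses the hypothesis of Theorem~\ref{theorem: ell_infty sums}; that theorem immediately yields the $\SSDTWOP{\kappa}$ for $X\oplus_\infty Y$.

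The converse is where the only subtlety lies, and I expect it to be the main obstacle. The vice-versa of Theorem~\ref{theorem: ell_infty sums} supplies, for every $r\in[0,1)$, \emph{some} component satisfying $P_\alpha(r)$, but this component may a priori depend on $r$, whereas the equivalence requires one fixed component valid for all $r$. I would remove this dependence by monotonicity: since $r$-norming implies $r'$-norming and $\|y\|\ge r$ implies $\|y\|\ge r'$ whenever $r'\le r$, each set $S_\alpha:=\{r\in[0,1):P_\alpha(r)\}$ is an initial segment of $[0,1)$. Two initial segments of a linearly ordered set are comparable under inclusion, and here $S_X\cup S_Y=[0,1)$, so the larger of them is already all of $[0,1)$; the corresponding component satisfies $P_\alpha(r)$ for every $r$ and hence, by the equivalence, enjoys the $\SSDTWOP{\kappa}$. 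It is precisely this upgrade from a pointwise-in-$r$ choice to a uniform one that uses the finiteness of $\mathscr A$, and the monotonicity plus two-element pigeonhole argument dispatches it cleanly.
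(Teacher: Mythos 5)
Your proposal is correct and follows the same route as the paper, whose entire proof is ``Apply Theorem~\ref{theorem: ell_infty sums} with $|\mathscr A|=2$.'' Your additional monotonicity-plus-pigeonhole step, showing that the component witnessing the clause $P_\alpha(r)$ can be chosen independently of $r$, is a legitimate detail the paper leaves implicit, and you handle it correctly.
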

\begin{proof}
    Apply Theorem~\ref{theorem: ell_infty sums} with $|\mathscr A|=2$.
\end{proof}

\begin{example}\label{example: c_0(ell_n) SSD2P}
    Let $\kappa>\aleph_0$. We claim that $c_0(\mathbb N,\ell_n(\kappa))$ enjoys the $\SSDTWOP{\kappa}$, despite the fact that it is a sum of reflexive spaces. To this aim, fix $\varepsilon>0$ and choose any $m\in\mathbb N$ satisfying $2^\frac{1}{m}\le1+\varepsilon$.

    Now fix a set $A\subset S_{\ell_{m}(\kappa)^*}$ of cardinality $<\kappa$ and let $\{x^*_\alpha:\alpha\in\mathscr A\}$ be an enumeration for $A$. Moreover, find, for each $\alpha\in\mathscr A$, $x_\alpha\in S_{\ell_m(\kappa)}$ satisfying $x^*_\alpha(x_\alpha)\ge1-\varepsilon$.
    
    Since the support of the $x_\alpha$'s is at most countable and $\kappa>\aleph_0$, there exists an ordinal $\mu<\kappa$ such that $x_\alpha(\mu)=0$ holds for all $\alpha\in\mathscr A$. Call $y:=\delta_\lambda\in B_{\ell_m(\kappa)}$ and notice that
    \begin{equation*}
        \|x_\alpha\pm y\|=\left(\sum_{\lambda<\kappa}|x_\alpha(\lambda)|^m+1\right)^\frac{1}{m}=2^\frac{1}{m}\le1+\varepsilon
    \end{equation*}
    holds for every $\alpha\in\mathscr A$. Notice that, up to a small perturbation argument, we showed that the Banach spaces $\ell_n(\kappa)$'s satisfy the hypothesis of Theorem~\ref{theorem: c_0 sums}, thus the claim is proved.
    
    It is then clear that also the Banach space $\ell_\infty(\mathbb N,\ell_n(\kappa))$ enjoys the $\SSDTWOP{\kappa}$ thanks to Theorem~\ref{theorem: ell_infty sums}.
\end{example}

\begin{remark}\label{remark: c_0 sum not assd2p}
    One might wonder whether Theorem~\ref{theorem: c_0 sums} can be pushed further and used to obtain $c_0$ sums which possess the $\ASSDTWOP{\kappa}$. Unfortunately this doesn't happen, as a matter of fact, the space $c_0(\mathbb N,\ell_n(\kappa))$ fails the $\ASSDTWOP{\kappa}$ because, if by absurd it had the property, then Theorem~\ref{proposition: characterization k-ssd2p} would apply and this would lead to a contradiction when combined with Theorem~\ref{theorem: c_0 sums of uniformly convex spaces}.
\end{remark}
\subsection{Tensor product}\label{subsection: tensor product}

It is known that the $\ssdtwop$ is preserved by taking projective tensor products \cite[Theorem~2.2]{L2020_SymmetricStrongDiameterTwoPropertyInTensorProductsOfBanachSpaces}. In the cited paper, the authors' proof relies on the following characterization of the $\ssdtwop$.

\begin{fact}\label{fact: equivalent characterization SSD2P}\cite[Theorem~2.1]{HLLN2019_SymmetricStrongDiameterTwoProperty}
    Let $X$ be a Banach space. The following assertions are equivalent:
    \begin{itemize}
        \item[(i)] $X$ has the $\ssdtwop$.
        \item[(ii)] Given non-empty relatively weakly open sets $U_1,\ldots, U_n$ in $B_X$ and $\varepsilon>0$, there exist $x_1,\ldots, x_n, y\in B_X$ such that $\|y\|\ge1-\varepsilon$, $x_i\pm y\in B_X$ and $x_i\in U_i$ for all $1\le i\le n$.
        \item[(iii)] Given $x_1,\ldots x_n\in S_X$, there exist nets $(y^i_\alpha)$ and $(z_\alpha)$ in $S_X$ such that $\lim\|y^i_\alpha\pm z_\alpha\|=1$ and, with respect to the weak topology on $X$, $\lim z_\alpha=0$ and $\lim y^i_\alpha=x_i$ hold for all $1\le i\le n$.
    \end{itemize}
\end{fact}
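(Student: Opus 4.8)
The plan is to establish the three-way equivalence cyclically, as (i) $\Rightarrow$ (ii) $\Rightarrow$ (iii) $\Rightarrow$ (i), since the two ``downward'' implications (i) $\Rightarrow$ (ii) and (iii) $\Rightarrow$ (i) are exactly where one passes between the functional formulation, the geometric formulation, and the net formulation.

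The genuinely non-formal step is (i) $\Rightarrow$ (ii): the $\ssdtwop$ only produces points \emph{norming} prescribed functionals, whereas (ii) asks for points lying inside arbitrary relatively weakly open sets. To bridge this I would invoke Bourgain's lemma (every non-empty relatively weakly open subset of $B_X$ contains a convex combination of slices) to shrink each $U_i$ to a set of the form $C_i = \{\sum_{j}\lambda_{ij}s_j : s_j\in S(f_{ij},\alpha_{ij})\}\subseteq U_i$, with $f_{ij}\in S_{X^*}$ and $\sum_j\lambda_{ij}=1$. Applying the $\ssdtwop$ \emph{simultaneously} to the finite family of all the $f_{ij}$, with $\varepsilon'<\min_{i,j}\alpha_{ij}$ and $\varepsilon'\le\varepsilon$, yields points $\xi_{ij}\in S(f_{ij},\alpha_{ij})$ together with a \emph{single} $y\in B_X$, $\|y\|\ge1-\varepsilon$, such that $\xi_{ij}\pm y\in B_X$ for all $i,j$. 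The key observation is that the symmetric perturbation $y$ is common, so putting $x_i:=\sum_j\lambda_{ij}\xi_{ij}\in C_i\subseteq U_i$ gives $x_i\pm y=\sum_j\lambda_{ij}(\xi_{ij}\pm y)\in B_X$ by convexity. This is precisely (ii).

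For (ii) $\Rightarrow$ (iii), fix $x_1,\dots,x_n\in S_X$ and index a net by triples $\alpha=(\mathcal{W},\mathcal{G},\varepsilon)$, where $\mathcal{W}=(W_1,\dots,W_n)$ are weak neighborhoods of the $x_i$, $\mathcal{G}=\{g_1,\dots,g_p\}\subset S_{X^*}$ is finite, and $\varepsilon>0$, directed by reverse inclusion of the $W_i$, inclusion of $\mathcal{G}$, and decreasing $\varepsilon$. At each $\alpha$ I would apply (ii) to the augmented family consisting of $U_i:=W_i\cap B_X$ together with the slices $V_j:=\{x\in B_X:g_j(x)>1-\varepsilon\}$; the role of the extra slices is only to constrain the perturbation, since from $g_j(\xi_j)>1-\varepsilon$ and $\xi_j\pm y^\alpha\in B_X$ one deduces $|g_j(y^\alpha)|\le\varepsilon$, which forces $y^\alpha\to0$ weakly. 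The points $x_i^\alpha\in W_i$ converge weakly to $x_i$, and since $\|x_i\|=1$, weak lower semicontinuity of the norm gives $\|x_i^\alpha\|\to1$; normalizing $y_\alpha^i:=x_i^\alpha/\|x_i^\alpha\|$ and $z_\alpha:=y^\alpha/\|y^\alpha\|$ places everything on $S_X$ while preserving the weak limits, and $\|y_\alpha^i\pm z_\alpha\|\to1$ follows from $\|x_i^\alpha\pm y^\alpha\|\le1$ for the upper bound and from testing against a norming functional of $x_i$ for the lower bound.

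Finally, (iii) $\Rightarrow$ (i) is a rescaling argument: given $x_1^*,\dots,x_n^*\in S_{X^*}$ and $\varepsilon>0$, pick $x_i\in S_X$ with $x_i^*(x_i)>1-\varepsilon/2$, feed them to (iii), and for $\alpha$ large use $x_i^*(y_\alpha^i)\to x_i^*(x_i)$ together with $\|y_\alpha^i\pm z_\alpha\|<1+\delta$; dividing $y_\alpha^i$ and $z_\alpha$ by $1+\delta$ pushes the symmetric sums into $B_X$ while keeping $\|z_\alpha/(1+\delta)\|\ge1-\varepsilon$ and $x_i^*(y_\alpha^i/(1+\delta))\ge1-\varepsilon$. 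The hard part is thus concentrated in (i) $\Rightarrow$ (ii), where Bourgain's lemma and the compatibility of a common $y$ with convex combinations do the work; the remaining implications are bookkeeping with nets, normalization, and the elementary inequality $|g(y)|\le1-g(x)$, valid for $g\in S_{X^*}$ whenever $x\pm y\in B_X$.
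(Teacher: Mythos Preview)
The paper does not supply its own proof of this Fact; it is quoted verbatim from \cite[Theorem~2.1]{HLLN2019_SymmetricStrongDiameterTwoProperty} and used as a known result. So there is no in-paper proof to compare against directly.

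That said, your argument is correct, and its core aligns with the proof the paper \emph{does} give for the transfinite analogue in Proposition~\ref{proposition: characterization k-ssd2p}: the step (i)$\Rightarrow$(ii) via Bourgain's lemma and the convexity trick $x_U\pm y=\sum r_i(x_{i,U}\pm y)$ is exactly what appears there. One small difference worth noting: in the transfinite statement (ii) the paper builds in a weak neighborhood $V$ of $0$ with $y\in V$, so that $z_\alpha\to0$ weakly is automatic in (ii)$\Rightarrow$(iii); since the finite statement (ii) you are proving lacks this, your device of adjoining extra slices $V_j=\{x:g_j(x)>1-\varepsilon\}$ and reading off $|g_j(y)|\le\varepsilon$ from $\xi_j\pm y\in B_X$ is the right substitute. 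Your normalization and rescaling steps in (ii)$\Rightarrow$(iii) and (iii)$\Rightarrow$(i) are routine and correctly handled.
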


As we will later demonstrate, Fact~\ref{fact: equivalent characterization SSD2P} doesn't hold true for the $\SSDTWOP{\kappa}$ whenever $\kappa>\aleph_0$. Therefore, a different proof is required to extend \cite[Theorem~2.2]{L2020_SymmetricStrongDiameterTwoPropertyInTensorProductsOfBanachSpaces} to the transfinite setting.

\begin{theorem}\label{theorem: projective tensor product}
    Let $X$ and $Y$ be Banach spaces and $\kappa>\aleph_0$. If $X$ and $Y$ have the $\SSDTWOP{\kappa}$, then the projective tensor product $X\hat{\otimes}_{\pi}Y$ enjoys the $\SSDTWOP{\kappa}$.
\end{theorem}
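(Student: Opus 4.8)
The plan is to prove that the projective tensor product $X\hat\otimes_\pi Y$ has the $\SSDTWOP{\kappa}$ by working directly with the definition, since Fact~\ref{fact: equivalent characterization SSD2P} is unavailable in the transfinite setting. Recall that $(X\hat\otimes_\pi Y)^*=\mathcal{L}(X,Y^*)$, so a set $A\subset S_{(X\hat\otimes_\pi Y)^*}$ of cardinality $<\kappa$ is a family $\{T_\alpha:\alpha\in\mathscr A\}$ of norm-one operators in $\mathcal{L}(X,Y^*)$, where the action on an elementary tensor is $T_\alpha(x\otimes y)=\langle T_\alpha x,y\rangle$. Given such a family and $\varepsilon>0$, the goal is to produce $B\subset B_{X\hat\otimes_\pi Y}$ that $(1-\varepsilon)$-norms $A$ together with a single $y_0\in B_{X\hat\otimes_\pi Y}$ of norm $\ge 1-\varepsilon$ satisfying $B\pm y_0\subset B_{X\hat\otimes_\pi Y}$.

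First I would choose, for each $\alpha$, an element $x_\alpha\otimes y_\alpha$ (or a short finite-rank tensor) that nearly norms $T_\alpha$; concretely pick $x_\alpha\in S_X$ with $\|T_\alpha x_\alpha\|$ close to $1$ and then $y_\alpha\in S_Y$ with $\langle T_\alpha x_\alpha,y_\alpha\rangle$ close to $1$. This reduces the problem to the two coordinate spaces. The key move is to apply the $\SSDTWOP{\kappa}$ of $X$ to a suitable set of functionals in $S_{X^*}$ of cardinality $<\kappa$ built from the $T_\alpha$ and $y_\alpha$ — namely the normalized functionals $x\mapsto\langle T_\alpha x,y_\alpha\rangle$ — obtaining a set $B_X'\subset B_X$ that $(1-\delta)$-norms them and a vector $u\in B_X$ with $\|u\|\ge 1-\delta$ and $B_X'\pm u\subset B_X$. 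Symmetrically, I would apply the $\SSDTWOP{\kappa}$ of $Y$ to the relevant functionals on $Y$ to obtain $B_Y'\subset B_Y$ and $v\in B_Y$ with the analogous properties. The natural candidate for the perturbing element is then $y_0:=u\otimes v$, which has projective norm $\|u\|\,\|v\|\ge(1-\delta)^2\ge 1-\varepsilon$ for suitable $\delta$, and the norming set is $B:=\{a_\alpha\otimes b_\alpha\}$ where $a_\alpha\in B_X'$ and $b_\alpha\in B_Y'$ are chosen to nearly norm the respective functionals for the index $\alpha$.

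The \emph{main obstacle} is controlling the projective norm of $a_\alpha\otimes b_\alpha\pm u\otimes v$: unlike the $\ell_\infty$ case, one cannot simply take a maximum of norms, and $\|a_\alpha\otimes b_\alpha\pm u\otimes v\|_\pi$ need not be bounded by $1$ even when the tensor-factor differences are. The trick I expect to need is the identity
\begin{equation*}
    a\otimes b - u\otimes v=\tfrac12\big((a-u)\otimes(b+v)+(a+u)\otimes(b-v)\big),
\end{equation*}
which expresses the difference through the factorwise sums and differences. Applying the triangle inequality for $\|\cdot\|_\pi$ and using $\|a_\alpha\pm u\|\le 1$, $\|b_\alpha\pm v\|\le 1$ (the outputs of the two $\SSDTWOP{\kappa}$ applications) gives $\|a_\alpha\otimes b_\alpha\pm u\otimes v\|_\pi\le\tfrac12(1\cdot 1+1\cdot 1)=1$, as required. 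This is precisely where the symmetric control of both $+$ and $-$ in the definition of $\SSDTWOP{\kappa}$ is essential, and it is what makes the symmetric property (rather than the plain SD2P) pass to projective tensor products.

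Finally I would verify the norming estimate: for each $\alpha$, $T_\alpha(a_\alpha\otimes b_\alpha)=\langle T_\alpha a_\alpha,b_\alpha\rangle$, and by choosing $a_\alpha$ to $(1-\delta)$-norm the functional $\langle T_\alpha\,\cdot\,,y_\alpha\rangle$ and $b_\alpha$ close to $y_\alpha$ (using that $b_\alpha$ norms the appropriate functional on $Y$), this product is $\ge 1-\varepsilon$ after collecting the $\delta$-errors and fixing $\delta$ small enough in terms of $\varepsilon$. The cardinality bookkeeping is immediate since both auxiliary families of functionals have cardinality $\le|\mathscr A|<\kappa$, so no cofinality hypothesis on $\kappa$ is needed beyond $\kappa>\aleph_0$.
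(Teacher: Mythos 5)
Your overall architecture matches the paper's proof: reduce to the factors, apply the $\SSDTWOP{\kappa}$ of $X$ and of $Y$ to functionals induced by the bilinear forms, take $y_0=u\otimes v$ as the symmetrizing element, and control $\|a_\alpha\otimes b_\alpha\pm u\otimes v\|_\pi$ via the identity
\begin{equation*}
    a\otimes b\pm u\otimes v=\tfrac12\bigl((a\pm u)\otimes(b+v)+(a\mp u)\otimes(b-v)\bigr),
\end{equation*}
which is exactly the content of the lemma the paper cites for this step. However, there is a genuine gap in your norming estimate. You apply the two instances of $\SSDTWOP{\kappa}$ \emph{in parallel}: in $X$ to the functionals $x\mapsto\langle T_\alpha x,y_\alpha\rangle$ and in $Y$ to functionals built from the \emph{original} $x_\alpha$ (namely $y\mapsto\langle T_\alpha x_\alpha,y\rangle$, up to normalization). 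This gives you two separate estimates, $\langle T_\alpha a_\alpha,y_\alpha\rangle\ge(1-\delta)^2$ and $\langle T_\alpha x_\alpha,b_\alpha\rangle\ge(1-\delta)^2$, but neither of these controls the mixed quantity you actually need, $T_\alpha(a_\alpha\otimes b_\alpha)=\langle T_\alpha a_\alpha,b_\alpha\rangle$: a bilinear form can be nearly normed along both ``axes'' $(\cdot,y_\alpha)$ and $(x_\alpha,\cdot)$ while being small at the off-axis pair $(a_\alpha,b_\alpha)$. Your attempted repair --- choosing ``$b_\alpha$ close to $y_\alpha$'' --- is not something the $\SSDTWOP{\kappa}$ can deliver: the property only produces elements that nearly norm prescribed functionals, and the set of such elements can be enormous (think of the face of $B_{\ell_\infty}$ normed by a coordinate functional), so there is no way to force $b_\alpha$ to be norm-close to any prescribed vector.

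The fix, and this is precisely what the paper does, is to apply the two properties \emph{sequentially} rather than symmetrically: first get $a_\alpha$ (the paper's $x'_B$) from the $\SSDTWOP{\kappa}$ of $X$ applied to the normalization of $B(\cdot\otimes y_\alpha)$, and \emph{then} apply the $\SSDTWOP{\kappa}$ of $Y$ to the normalization of $B(a_\alpha\otimes\cdot)$ --- the functional evaluated at the already-perturbed first coordinate. Then $b_\alpha$ (the paper's $y''_B$) nearly norms exactly the functional whose value at $b_\alpha$ you need, and the estimate closes through the chain
\begin{equation*}
    B(a_\alpha\otimes b_\alpha)\ge(1-\varepsilon)^{\frac13}\|B(a_\alpha\otimes\cdot)\|\ge(1-\varepsilon)^{\frac13}B(a_\alpha\otimes y_\alpha)\ge(1-\varepsilon)^{\frac23}\|B(\cdot\otimes y_\alpha)\|\ge1-\varepsilon,
\end{equation*}
where each step uses either a near-norming property or the trivial bound of a norm by a point evaluation. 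Your cardinality bookkeeping and the symmetric tensor-norm control are fine; only this ordering of the two applications needs to be corrected for the proof to go through.
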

\begin{proof}
    Fix a set $\mathscr B\subset S_{(X\hat{\otimes}_{\pi}Y)^*}$ of cardinality $<\kappa$ and $\varepsilon>0$. Recall that the Banach space $(X\hat{\otimes}_{\pi}Y)^*$ is isometrically isomorphic to the space of bounded bilinear forms acting on $X\times Y$ \cite[Theorem~2.9]{R2002_Book_IntroductionToTensorProductsOfBanachSpaces}, hence, for every $B\in\mathscr B$, there exists $x_B\otimes y_B\in S_X\otimes S_Y$ satisfying $B(x_B\otimes y_B)\ge(1-\varepsilon)^\frac{1}{3}$.

    Given $B\in\mathscr B$, define
    \begin{equation*}
        B':=\frac{B(\cdot\otimes y_B)}{\|B(\cdot\otimes y_B)\|}\in S_{X^*}.
    \end{equation*}
    
    Since $X$ has the $\SSDTWOP{\kappa}$, there are $x$ and $x'_B$'s in $B_X$ such that $\|x\|\ge(1-\varepsilon)^\frac{1}{2}$ and, for all $B\in\mathscr B$, $x'_B\pm x\in B_X$ and $B'(x'_B)\ge(1-\varepsilon)^\frac{1}{3}$.
    
    Now, given $B\in\mathscr B$, call
    \begin{equation*}
        B'':=\frac{B(x'_B\otimes\cdot)}{\|B(x'_B\otimes\cdot)\|}\in S_{Y^*}.
    \end{equation*}

    Since $Y$ has the $\SSDTWOP{\kappa}$, there are $y$ and $y''_B$'s in $B_Y$ such that $\|y\|\ge(1-\varepsilon)^\frac{1}{2}$ and, for all $B\in\mathscr B$, $y''_B\pm y\in B_Y$ and $B''(y''_B)\ge(1-\varepsilon)^\frac{1}{3}$.

    Define $u_B:=x'_B\otimes y''_B\in B_{X\hat{\otimes}_{\pi}Y}$ and $v:=x\otimes y\in B_{X\hat{\otimes}_{\pi}Y}$. Notice that $\|v\|=\|x\|\|y\|\ge1-\varepsilon$, moreover, the fact that $u_B\pm v\in B_{X\hat{\otimes}_{\pi}Y}$ is due to \cite[Lemma~2.1]{L2020_SymmetricStrongDiameterTwoPropertyInTensorProductsOfBanachSpaces}. Eventually, let us prove that the set $\{u_B:B\in\mathscr B\}$ $(1-\varepsilon)$-norms $\mathscr B$.
    \begin{align*}
        B(u_B) & =B(x'_B\otimes y''_B)\ge(1-\varepsilon)^\frac{1}{3}\|B(x'_B\otimes\cdot)\|\ge(1-\varepsilon)^\frac{1}{3}B(x'_B\otimes y_B)\\
        & \ge(1-\varepsilon)^\frac{2}{3}\|B(\cdot\otimes y_B)\|\ge(1-\varepsilon)^\frac{2}{3}B(x_B\otimes y_B)\ge1-\varepsilon,
    \end{align*}
    which proves the claim and thus concludes the proof.
\end{proof}

\begin{remark}\label{remark: prohective tensor product}
    It is known that requiring only one component to have the $\ssdtwop$ is not enough in order to ensure the projective tensor product to enjoy the $\ssdtwop$ \cite[Corollary~3.9]{LLR2017_OctahedralNormsInTensorProductsOfBanachSpaces}. Up to a few changes, the same ideas can be used to show that requiring in the statement of Theorem~\ref{theorem: projective tensor product} only one component to enjoy the $\SSDTWOP{\kappa}$ is not enough. Let us sketch the argument required to prove this statement.

    We will later show that $\ell_\infty(\kappa)$ has the $\ASSDTWOP{\kappa}$ (see Example~\ref{example: C_0(X) spaces with SSD2P}), nevertheless, we claim that the Banach space $X:=\ell_\infty(\kappa)\hat\otimes_\pi\ell_3^3$ doesn't enjoy the $\SSDTWOP{\kappa}$.

    Since $\ell_3^3$ is not finitely representable in $\ell_1$, so it is not finitely representable in $\ell_1(\kappa)$ either (notice that each finite dimensional subspace of $\ell_1(\kappa)$ is isometrically isomorphic to some finite dimensional subspace of $\ell_1$, and vice-versa). Thanks to a simple transfinite analogue of \cite[Lemma~3.7]{LLR2017_OctahedralNormsInTensorProductsOfBanachSpaces} (replacing finite dimensional spaces with spaces of density $<\kappa$) we conclude that $\ell_1(\kappa)\hat\otimes_\varepsilon\ell_{3^*}^3$ is not $\kappa$-octahedral (see \cite[Definition~5.3]{CLL2022_Preprint_ACharacterizationOfBanachSpacesContainingEll1kappaViaBallCoveringProperties}), moreover, we can infer that $X=(\ell_1(\kappa)\hat\otimes_\varepsilon\ell_{3^*}^3)^*$ \cite[Theorem~5.3]{R2002_Book_IntroductionToTensorProductsOfBanachSpaces}. Therefore, by applying \cite[Theorem~3.2]{CLL2022_AttainingStrongDiameterTwoPropertyForInfiniteCardinals}, we conclude that $X$ fails the $\SSDTWOP{\kappa}$.
\end{remark}
\subsection{Some more remarks}\label{subsection: some more remarks}\hfill\\

Previously we claimed that the transfinite analogue of Fact~\ref{fact: equivalent characterization SSD2P} doesn't hold true. Let us now prove this statement for the implication (i)$\iff$(ii) by continuing the investigation began in Example~\ref{example: c_0(ell_n) SSD2P}. As a matter of fact, the Banach space $c_0(\mathbb N,\ell_n(\kappa))$ enjoys the $\SSDTWOP{\kappa}$, nevertheless we claim that it fails condition (ii) from Fact~\ref{fact: equivalent characterization SSD2P} with respect to $\aleph_1$. This claim follows from the following theorem.

\begin{theorem}\label{theorem: c_0 sums of uniformly convex spaces}
    Let $(X_n)$ be a sequence of Banach spaces. If, given any sequence of relatively weakly open sets $(U_n)$ in $B_{c_0(\mathbb N, X_n)}$ and $\varepsilon>0$, there exist $(x_n)$ and $y$ in $B_{c_0(\mathbb N, X_n)}$ such that $\|y\|\ge1-\varepsilon$, $x_n\pm y\in B_{c_0(\mathbb N, X_n)}$ and $x_n\in U_n$ for all $n\in\mathbb N$, then there exists $m\in\mathbb N$ such that $X_m$ is not uniformly convex.
\end{theorem}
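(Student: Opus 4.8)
The plan is to prove the contrapositive: assuming that \emph{every} $X_n$ is uniformly convex, I will produce a single sequence $(U_n)$ of non-empty relatively weakly open subsets of $B_{c_0(\mathbb N, X_n)}$ together with an $\varepsilon>0$ admitting no valid choice of $(x_n)$ and $y$. Throughout, fix $\varepsilon=1/2$ and, for each coordinate $k$, let $\delta_k$ be the modulus of convexity of $X_k$, so that $\delta_k(t)>0$ for every $t\in(0,2]$.

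First I would set up the coordinatewise data. For each $k$ pick $\xi_k\in S_{X_k}$ and, by Hahn--Banach, a norming functional $f_k\in S_{X_k^*}$ with $f_k(\xi_k)=1$; then choose a threshold $\eta_k\in\bigl(0,\delta_k(1/2)\bigr)$. The map $x\mapsto f_k(x(k))$ is the action of the element of $\ell_1(\mathbb N, X_n^*)$ supported on coordinate $k$ with value $f_k$, hence a bounded, and in particular weakly continuous, functional on $c_0(\mathbb N, X_n)$. Consequently
\[
    U_n:=\bigl\{x\in B_{c_0(\mathbb N, X_n)}:f_k(x(k))>1-\eta_k\text{ for }1\le k\le n\bigr\}
\]
is relatively weakly open, and it is non-empty since the finitely supported vector $\sum_{k=1}^n\xi_k e_k$ belongs to $S_{c_0(\mathbb N, X_n)}\cap U_n$.

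Next I would derive the contradiction. Suppose, towards a contradiction, that the displayed property holds for this $(U_n)$ and $\varepsilon$, yielding $(x_n)$ and $y$ in $B_{c_0(\mathbb N, X_n)}$ with $\|y\|\ge1/2$, $x_n\in U_n$ and $x_n\pm y\in B_{c_0(\mathbb N, X_n)}$ for all $n$. Since $\|y\|=\sup_k\|y(k)\|\ge1/2$, fix a coordinate $k_0$ with $\|y(k_0)\|>1/4$ and examine the single point $x_{k_0}$. Membership $x_{k_0}\in U_{k_0}$ forces $f_{k_0}(x_{k_0}(k_0))>1-\eta_{k_0}$, whence $\|x_{k_0}(k_0)\|>1-\eta_{k_0}>1-\delta_{k_0}(1/2)$. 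On the other hand, setting $u=x_{k_0}(k_0)+y(k_0)$ and $v=x_{k_0}(k_0)-y(k_0)$, the inclusions $x_{k_0}\pm y\in B_{c_0(\mathbb N, X_n)}$ give $\|u\|,\|v\|\le1$, while $\|u-v\|=2\|y(k_0)\|>1/2$; the definition of the modulus of convexity then yields $\|x_{k_0}(k_0)\|=\|(u+v)/2\|\le1-\delta_{k_0}(1/2)$, contradicting the previous inequality.

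The main obstacle, and the reason the countable case behaves differently from the finite $\ssdtwop$, is that I cannot assume a common modulus of convexity for the family $(X_n)$: the values $\delta_k(1/2)$ may decay to $0$ as $k\to\infty$, so a uniform threshold would be worthless. This is overcome by calibrating $\eta_k$ to $\delta_k(1/2)$ coordinate by coordinate and by exploiting that countably many open sets are available, so that the constraint attached to coordinate $k$ is built into every $U_n$ with $n\ge k$ and is therefore active for the index $k_0$ on which $y$ happens to be large. With only finitely many relatively weakly open sets one could never anticipate this coordinate, which is exactly why condition (ii) of Fact~\ref{fact: equivalent characterization SSD2P} survives in the finite setting.
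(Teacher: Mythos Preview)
Your proof is correct. The underlying idea---pick a coordinate $k_0$ where $y$ is large, force the corresponding $x$ to have $k_0$-th component close to the sphere, and then contradict the modulus of convexity of $X_{k_0}$---is the same as in the paper, but the two arguments are organised differently.

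The paper argues directly rather than by contraposition. It fixes a single $\varepsilon>0$, chooses functionals $x_n^*\in S_{\ell_1(\mathbb N,X_n^*)}$ supported on \emph{all} coordinates with dominant mass at coordinate $n$, and forms a \emph{doubly}-indexed family of slices $U_{n,m}=\{x:x_n^*(x)>1-m^{-1}\|x_n^*(m)\|\}$. After locating $p$ with $\|y(p)\|\ge1-\varepsilon$, the parameter $m$ is used to drive $\|x_{p,m}(p)\|$ arbitrarily close to $1$, yielding $\delta_{X_p}(2-2\varepsilon)=0$ in the limit. Your contrapositive version avoids the second index by \emph{pre-loading} the required precision: since every $X_k$ is assumed uniformly convex, you can pick $\eta_k<\delta_k(1/2)$ in advance and bake the constraint for coordinate $k$ into every $U_n$ with $n\ge k$. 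This gives a singly-indexed family and a one-shot contradiction at the single point $x_{k_0}$, with no limiting argument needed. Both arguments exploit that countably many open sets are available so that the constraint is active on whichever coordinate $y$ turns out to be large; yours makes this point more transparent and is marginally cleaner, while the paper's direct form identifies a specific space with $\delta_{X_p}(2-2\varepsilon)=0$ without ever assuming the $X_n$ are uniformly convex.
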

\begin{proof}
    Let $A:=\{x^*_n:n\in\mathbb N\}\subset S_{\ell_1(\mathbb N, X_n^*)}$, where the $x^*_n$'s are any chosen elements satisfying the following conditions:
    \begin{equation*}
        x^*_n(m)\not=0\text{ and }\|x^*_n(n)\|\ge\|x^*_n(m)\|\text{ for all }n,m\in\mathbb N.
    \end{equation*}
    Now consider the relatively weakly open sets
    \begin{equation*}
        U_{n,m}:=\{x\in B_{c_0(\mathbb N, X_n)}:x^*_n(x)>1-m^{-1}\|x^*_n(m)\|\}
    \end{equation*}
    Fix $\varepsilon>0$ and find $x_{n,m}\in U_{n,m}$ and $y\in B_{c_0(\mathbb N, X_n)}$ such that $\|y\|\ge1-\varepsilon$ and $y\pm x_{n,m}\in B_{c_0(\mathbb N, X_n)}$ hold for all $n,m\in\mathbb N$.

    Since $\|y\|\ge1-\varepsilon$, we can find $p\in\mathbb N$ such that $\|y(p)\|\ge1-\varepsilon$. On the other hand, since $x_{p,m}\in U_{p,m}$, we have that
    \begin{align*}
        1-m^{-1}\|x^*_p(m)\| & \le x^*_p(x_{p,m})\le \sum_{n\not=p}\|x^*_p(n)\|+x^*_p(p)(x_{p,m}(p))\\
        & =1-\|x^*_p(p)\|+x^*_p(p)(x_{p,m}(p)),
    \end{align*}
    hence
    \begin{equation*}
        x^*_p(p)(x_{p,m}(p))\ge\|x^*_p(p)\|-m^{-1}\|x^*_p(m)\|,
    \end{equation*}
    therefore
    \begin{equation*}
        \|x_{p,m}(p)\|\ge1-m^{-1}\frac{\|x^*_p(m)\|}{\|x^*_p(p)\|}\ge1-m^{-1}.
    \end{equation*}
    Now, the fact that $x_{p,m}\pm y\in B_{c_0(\mathbb N, X_n)}$ implies
    \begin{equation*}
        1\ge\|x_{p,m}\pm y\|\ge\|x_{p,m}(p)\pm y(p)\|.
    \end{equation*}
    Finally, let us compute the modulus of convexity of $X_p$.
    \begin{align*}
        \delta_{X_p}(2-2\varepsilon) & :=\inf\left\{1-\left\|\frac{x+y}{2}\right\|:x,y\in B_{X_p}\text{ and }\|x-y\|\ge2-2\varepsilon\right\}\\
        & \le\inf_m\left(1-\frac{\|(x_{p,m}(p)+y(p))+(x_{p,m}(p)-y(p))\|}{2}\right)\\
        & =\inf_m(1-\|x_{p,m}(p)\|)\le\inf_m m^{-1}\\
        & =0,
    \end{align*}
    which implies that $X_p$ is not uniformly convex.
\end{proof}

Let us now turn our attention to the implication (i)$\iff$(iii) from Fact~\ref{fact: equivalent characterization SSD2P}. We claim that also this fails in the transfinite context.

\begin{example}\label{example: counterexample i iff iii}
    We will prove that $\ell_\infty(\kappa)$ fails the $\SSDTWOP{\kappa^+}$ (see Example~\ref{example: C_0(X) spaces with SSD2P}). Nevertheless, condition (iii) from Fact~\ref{fact: equivalent characterization SSD2P} is satisfied in a very strong way. In fact, fix $x\in\ S_{\ell_\infty(\kappa)}$, an ordinal $\mu<\kappa$ and call $y^x_\mu:=x-x(\mu)e_\mu\in B_{\ell_\infty(\kappa)}$ and $z_\mu=\delta_{\mu}\in S_{\ell_\infty(\kappa)}$. It is then clear that $y^x_\mu\pm z_\mu\in S_X$ and that, with respect to the weak topology, $\lim z_\mu=0$ and $\lim y^x_\mu=x$ holds for every $x\in S_{\ell_\infty(\kappa)}$. In other words, since $|\ell_\infty(\kappa)|=2^\kappa$, we showed that $\ell_\infty(\kappa)$ satisfies condition (iii) from Fact~\ref{fact: equivalent characterization SSD2P} with respect to $2^\kappa$.
\end{example}

Despite Theorem~\ref{theorem: c_0 sums of uniformly convex spaces} and Example~\ref{example: counterexample i iff iii}, it is possible to recover some transfinite analogue of Fact~\ref{fact: equivalent characterization SSD2P}, but only for the $\ASSDTWOP{\kappa}$.

\begin{proposition}\label{proposition: characterization k-ssd2p}
    Let $X$ be a Banach space and $\kappa>\aleph_0$. Consider the following statements:
    \begin{itemize}
        \item[(i)] $X$ has the $\ASSDTWOP{\kappa}$.
        \item[(ii)] Given a family $\mathscr U$ consisting of $<\kappa$ many relatively weakly open sets in $B_X$, a relatively weakly open neighborhood $V$ of $0$ in $B_X$ and $\varepsilon>0$, there are $\{x_U:U\in\mathscr U\}$ and $y\in V\cap S_X$ satisfying $x_U\in U$ and $x_u\pm y\in B_X$ for all $U\in\mathscr U$.
        \item[(iii)] Given $A\subset S_X$ of cardinality $<\kappa$, there are nets $\{(y^x_\alpha):x\in A\}$ and $(z_\alpha)$ in $S_X$ satisfying $\lim\|z_\alpha\pm y^x_\alpha\|=1$ and, with respect to the weak topology, $\lim z_\alpha=0$ and $\lim y^x_\alpha=x$ for all $x\in A$.
    \end{itemize}
    Then (i)$\implies$(ii)$\implies$(iii).
\end{proposition}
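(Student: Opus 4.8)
The plan is to prove (i)$\implies$(ii) by reducing the relatively weakly open sets in $\mathscr U$ to \emph{slices} of $B_X$ via Bourgain's lemma and then exploiting the convexity of the configuration furnished by the $\ASSDTWOP{\kappa}$, and to prove (ii)$\implies$(iii) by a net argument indexed by shrinking weak neighborhoods, with a normalization step to place the nets on $S_X$. For (i)$\implies$(ii), fix $\mathscr U$, $V$ and $\varepsilon>0$, and record a basic neighborhood $V\supseteq\{x\in B_X:|g_l(x)|<\eta,\ 1\le l\le p\}$ with $g_l\in S_{X^*}$ and $\eta\in(0,1)$. The key step is Bourgain's lemma: every nonempty relatively weakly open subset of $B_X$ contains a convex combination of slices. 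Hence, for each $U\in\mathscr U$, there are slices $S_{U,i}=\{x\in B_X:\psi_{U,i}(x)>1-\alpha_{U,i}\}$ with $\psi_{U,i}\in S_{X^*}$, $\alpha_{U,i}\in(0,1)$ and weights $\lambda_{U,i}\ge0$ summing to $1$ ($1\le i\le n_U$) such that $\sum_i\lambda_{U,i}S_{U,i}\subseteq U$. I would then set $A:=\{g_l:1\le l\le p\}\cup\{\psi_{U,i}:U\in\mathscr U,\ 1\le i\le n_U\}\subset S_{X^*}$; since each $U$ contributes finitely many functionals and $|\mathscr U|<\kappa$ with $\kappa>\aleph_0$, we have $|A|<\kappa$, so the $\ASSDTWOP{\kappa}$ yields $B\subset S_X$ norming $A$ and $y\in S_X$ with $B\pm y\subset S_X$.

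The verification then runs as follows. For each slice $S_{U,i}$, as $B$ norms $\psi_{U,i}$ I pick $b_{U,i}\in B$ with $\psi_{U,i}(b_{U,i})>1-\alpha_{U,i}$, so $b_{U,i}\in S_{U,i}$ and $b_{U,i}\pm y\in S_X$. Putting $x_U:=\sum_i\lambda_{U,i}b_{U,i}$, convexity gives $x_U\in\sum_i\lambda_{U,i}S_{U,i}\subseteq U$, while $x_U\pm y=\sum_i\lambda_{U,i}(b_{U,i}\pm y)$ is a convex combination of points of $S_X$ and hence lies in $B_X$. To see $y\in V$, I reuse the argument from the introduction: for each $l$, norming $g_l$ provides $x\in B$ with $g_l(x)$ as close to $1$ as desired, and since $x\pm y\in B_X$ we get $|g_l(y)|\le 1-g_l(x)<\eta$; as $y\in S_X$ this gives $y\in V\cap S_X$. (The parameter $\varepsilon$ is not needed, the $\ASSDTWOP{\kappa}$ being an isometric property.)

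For (ii)$\implies$(iii), fix $A\subset S_X$ with $|A|<\kappa$ and direct the set of tuples $\alpha=(\mathcal W,V,\varepsilon)$, where $\mathcal W=(W_x)_{x\in A}$ ranges over basic weak neighborhoods $W_x\ni x$ in $B_X$, $V$ over basic weak neighborhoods of $0$, and $\varepsilon>0$, ordered by reverse inclusion of neighborhoods and decreasing $\varepsilon$. For each $\alpha$, apply (ii) to $\{W_x:x\in A\}$ (each nonempty, as $x\in W_x$) to obtain $x^{(\alpha)}_x\in W_x$ and $z_\alpha:=y_\alpha\in V\cap S_X$ with $x^{(\alpha)}_x\pm z_\alpha\in B_X$. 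Then $z_\alpha\rightharpoonup0$ and $x^{(\alpha)}_x\rightharpoonup x$ along the net. Since $\|x^{(\alpha)}_x\|\le1$ and $x^{(\alpha)}_x\rightharpoonup x\in S_X$, weak lower semicontinuity of the norm forces $\|x^{(\alpha)}_x\|\to1$, so I set $y^x_\alpha:=x^{(\alpha)}_x/\|x^{(\alpha)}_x\|\in S_X$ (eventually the denominator is positive). Normalization alters $x^{(\alpha)}_x$ by a norm-null amount, whence $\limsup\|z_\alpha\pm y^x_\alpha\|\le1$; on the other hand $z_\alpha\pm y^x_\alpha\rightharpoonup\pm x$, so $\liminf\|z_\alpha\pm y^x_\alpha\|\ge\|x\|=1$, and therefore $\lim\|z_\alpha\pm y^x_\alpha\|=1$. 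Together with $y^x_\alpha\rightharpoonup x$ and $z_\alpha\rightharpoonup0$ this is precisely (iii).

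The main obstacle is the passage in (i)$\implies$(ii) from the functional (norming) formulation of the $\ASSDTWOP{\kappa}$ to \emph{arbitrary} relatively weakly open sets: a norming element of $B$ only lands in a slice, never in a prescribed weak neighborhood. Bourgain's lemma dissolves this by writing each $U$ as a convex combination of slices, after which the convexity of the set $\{u\in B_X:u\pm y\in B_X\}$ (it contains $B$ and is convex) recombines the individual slice-witnesses into a single point of $U$. In (ii)$\implies$(iii) the only delicate point is that the nets must lie on $S_X$ rather than in $B_X$, which the normalization above settles once $\|x^{(\alpha)}_x\|\to1$ is noted.
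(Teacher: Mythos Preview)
Your proof is correct and follows essentially the same route as the paper: for (i)$\implies$(ii) you invoke Bourgain's lemma to reduce each $U\in\mathscr U$ to a convex combination of slices, apply the $\ASSDTWOP{\kappa}$ to the resulting $<\kappa$ functionals, and recombine by convexity, while for (ii)$\implies$(iii) you run the same net argument indexed by shrinking weak neighborhoods. Your treatment of (ii)$\implies$(iii) is in fact more explicit than the paper's, which simply asserts the limit $\lim\|z_U\pm y^x_U\|=1$ and handles the normalization to $S_X$ by saying ``up to a perturbation argument'', whereas you spell out the weak-lower-semicontinuity step and the $\liminf$/$\limsup$ sandwich.
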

\begin{proof}
    (i)$\implies$(ii). Fix a family $\mathscr U$ consisting of $<\kappa$ many relatively weakly open sets in $B_X$, a relatively weakly open neighborhood $V$ of $0$ in $B_X$ and $\varepsilon>0$. For every $U\in\mathscr U$, thanks to Bourgain's lemma \cite[Lemma~II.1]{GGMS1987}, we can find functionals $x^*_{1,U},\ldots,x^*_{n_U,U}\in S_{X^*}$, $\varepsilon_U>0$ and convex coefficients $r_{1,U},\ldots,r_{n_U,U}$ such that
    \begin{equation*}
        \left\{\sum_{i=1}^{n_U}r_ix_i:x^*_{i,U}(x_i)>1-\varepsilon_U\text{ for all }1\le i\le n_U\right\}\subset U.
    \end{equation*}
    Moreover, we can find $x^*_{1,V},\ldots,x^*_{n_V,V}\in S_{X^*}$ and $\varepsilon_V>0$ satisfying
    \begin{equation*}
        \{x\in B_X:|x^*_{i,V}(x)|\le\varepsilon_V\text{ for all }1\le i\le n_V\}\subset V.
    \end{equation*}
    Since $X$ has the $\ASSDTWOP{\kappa}$ and $|\{x^*_{i,U}:1\le i\le n_U\text{ and }U\in\mathscr U\cup\{V\}\}|\le\aleph_0\cdot|\mathscr U|<\kappa$, there exist $\{x_{i,U}:1\le i\le n_U\text{ and }U\in\mathscr U\cup\{V\}\}$ and $y$ in $S_X$ satisfying $x^*_{i,U}(x_{i,U})\ge1-\varepsilon_U$ and $x_{i,U}\pm y\in S_X$ for all $1\le i\le n_U$ and $U\in\mathscr U\cup\{V\}$. Now, given $U\in\mathscr U$, define
    \begin{equation*}
        x_U:=\sum_{i=1}^{n_U}r_ix_{i,U}\in B_X
    \end{equation*}
    and notice that $x_U\in U$. Moreover,
    \begin{equation*}
        \|x_U\pm y\|=\left\|\sum_{i=1}^{n_U}r_i(x_{i,U}\pm y)\right\|\le\sum_{i=1}^{n_U}r_i\|x_{i,U}\pm y\|=1.
    \end{equation*}

    In order to conclude, it only remains to prove that $y\in V$. But this is clear because, for every $1\le i\le n_V$ we have that
    \begin{equation*}
        1=\|x_{i,V}\pm y\|\ge x^*_{i,V}(x_{i,V}\pm y)\ge1-\varepsilon_V\pm x^*_{i,V}(y),
    \end{equation*}
    which means that $|x^*_{i,V}(y)|\le\varepsilon_V$, hence $y\in V$.

    (ii)$\implies$(iii). Fix a set $A\subset S_X$ of cardinality $<\kappa$ and temporarily fix a weak neighborhood $U$ of $0$. Define $\mathscr U:=\{(x+U)\cap B_X:x\in A\}$ and find $\{y_U^x:x\in A\}\subset B_X$ and $z_U\in U\cap S_X$ satisfying $y_U^x\in x+U$ and $y_U^x\pm z_U\in B_X$ for all $x\in A$.

    Now semi-order the family of weakly open neighborhoods of $0$ with respect to the inclusion and consider the nets $(y_U^x)$ and $(z_U)$. It is clear that $\lim y^x_U=x$, $\lim z_U=0$ and $\lim\|z_U\pm y^x_U\|=1$ holds for all $x\in A$. Moreover, up to a perturbation argument, we can assume that all $y_U^x$'s belong to $S_X$. Thus the claim is proved.
\end{proof}

Let us show that the implication (iii)$\implies$(ii) from Proposition~\ref{proposition: characterization k-ssd2p} fails. As already witnessed by Example~\ref{example: counterexample i iff iii}, $\ell_\infty(\kappa)$ satisfies condition (iii) in a very strong way, nevertheless it fails the $\SSDTWOP{\kappa^+}$. Therefore, we only need to notice that condition (ii) with respect to $\kappa^+$ clearly implies possessing the $\SSDTWOP{\kappa^+}$, thus the claim is proved.

It remains unclear whether the implication (ii)$\implies$(i) hold.
\section{$C_0(X)$ spaces}\label{section: C_0(X) spaces}

In \cite{ANP2019_NewApplicationsOfExtremelyRegularFunctionSpaces} it was proved that $C_0(X)$, for $X$ infinite Hausdorff locally compact, always has the $\ssdtwop$. In this section we aim to extend the class of examples which enjoy the transfinite $\ssdtwop$ by trying to characterize under which conditions $C_0(X)$ spaces have this property. Before doing so, let us introduce a bit of notation about some cardinal functions.

Let $X$ be a topological space. Define the \emph{density character} of $X$ as
\begin{equation*}
    \density(X):=\min\{|\mathscr D|:\mathscr D\subset X\text{ is dense}\}+\aleph_0.
\end{equation*}
A cellular family in $X$ is a family of mutually disjoint open sets in $X$. Define the \emph{cellularity} of $X$ as
\begin{equation*}
    \cellularity(X):=\sup\{|\mathscr C|:\mathscr C\text{ is a cellular family in }X\}+\aleph_0.
\end{equation*}
It is well known that $\cellularity(X)\le\density(X)$. We refer the reader to \cite{KV1984_Book_HandbookOfSetTheoreticTopology} for a detailed treatment about these cardinal functions and more.

Before stating the main result of this section, let us recall that, thanks to the Riesz–Markov representation theorem, every continuous linear functional on $C_0(X)$ admits a unique representation as a regular countably additive Borel measure on $X$.

\begin{theorem}\label{theorem: C_0(X) SSD2P}
    Let $X$ be a $T_4$ locally compact space.
    \begin{itemize}
        \item[(i)] $C_0(X)$ fails the $\SSDTWOP{\density(X)^+}$.
        \item[(ii)] If $\cellularity(X)>\aleph_0$, then $C_0(X)$ has the $\ASSDTWOP{\cellularity(X)}$.
    \end{itemize}
\end{theorem}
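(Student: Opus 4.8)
The plan is to treat the two items separately, using the Riesz–Markov identification of $C_0(X)^*$ with the regular countably additive Borel measures on $X$ throughout.

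For (i), I would test the $\SSDTWOP{\density(X)^+}$ against the family of Dirac functionals. Fix a dense set $\mathscr D\subset X$ with $|\mathscr D|=\density(X)$ and put $A:=\{\delta_x:x\in\mathscr D\}\subset S_{C_0(X)^*}$; each $\delta_x$ has norm one, since local compactness together with normality provides a norm-one bump attaining $1$ at $x$, so $|A|\le\density(X)<\density(X)^+$. Fixing $\varepsilon=1/3$ and assuming the property held, I would obtain $B\subset B_{C_0(X)}$ that $(1-\varepsilon)$-norms $A$ and $y\in B_{C_0(X)}$ with $\|y\|\ge1-\varepsilon$ and $B\pm y\subset B_{C_0(X)}$. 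For each $x\in\mathscr D$ pick $f\in B$ with $f(x)=\delta_x(f)\ge1-\varepsilon$; evaluating $\|f\pm y\|\le1$ at the single point $x$ forces $|y(x)|\le\varepsilon$. Since $\mathscr D$ is dense and $y$ is continuous, the closed set $\{|y|\le\varepsilon\}$ contains $\mathscr D$, hence all of $X$, so $\|y\|\le\varepsilon<1-\varepsilon$, contradicting $\|y\|\ge1-\varepsilon$. This closes (i).

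For (ii), the idea is to exploit a large cellular family to place the perturbation $y$ on an open cell that none of the prescribed functionals charges, and then to norm those functionals by functions supported off that cell. Given $A=\{\mu_i:i\in I\}\subset S_{C_0(X)^*}$ with $|I|<\cellularity(X)$, the key counting step is that each finite measure $|\mu_i|$ assigns positive mass to only countably many members of any cellular family (for every $n$ only finitely many cells can have $|\mu_i|$-measure exceeding $1/n$, as their masses sum to at most $1$). Hence, for a fixed cellular family, the set of cells charged by some $\mu_i$ has cardinality at most $\max(|I|,\aleph_0)<\cellularity(X)$, where I use the hypothesis $\cellularity(X)>\aleph_0$. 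Choosing a cellular family $\mathscr C$ of cardinality strictly larger than this bound—which exists precisely because the bound lies below the supremum $\cellularity(X)$—I extract an open cell $C\in\mathscr C$ with $|\mu_i|(C)=0$ for all $i\in I$.

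On $C$ I would build $y\in S_{C_0(X)}$ with $0\le y\le1$, $y(p)=1$ for some $p\in C$, and compact support inside $C$, so that $\int y\,d|\mu_i|=0$. To norm each $\mu_i$ by functions vanishing on $\operatorname{supp}(y)$, for every $n$ I select $g\in B_{C_0(X)}$ with $\mu_i(g)\ge1-1/n$, fix a Urysohn function $\phi$ equal to $0$ on $\operatorname{supp}(y)$ and to $1$ off a relatively compact open set squeezed between $\operatorname{supp}(y)$ and $C$, and set $f:=\phi g$ renormalized into $S_{C_0(X)}$. Since $\{\phi\ne1\}\subset C$ is $|\mu_i|$-null, one gets $\mu_i(\phi g)=\mu_i(g)\ge1-1/n$, so the collection $B$ of all such $f$ (over $i\in I$ and $n\in\mathbb N$) norms $A$; and because every $f\in B$ vanishes on $\operatorname{supp}(y)$ while $y$ attains $1$ there, the supports are disjoint and $\|f\pm y\|=\max(\|f\|,\|y\|)=1$, giving $B\pm y\subset S_{C_0(X)}$ and hence the $\ASSDTWOP{\cellularity(X)}$. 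The main obstacle is the cellularity bookkeeping: one must establish the countable-charging bound per measure, aggregate it over the $<\cellularity(X)$ functionals, and then invoke that $\cellularity(X)$ is a supremum, possibly unattained, to secure a cellular family strictly exceeding the charged portion; the Urysohn separation and the disjoint-support norm computation are then routine.
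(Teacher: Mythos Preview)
Your proposal is correct and follows essentially the same route as the paper. In part (i) you bound $|y|$ by $\varepsilon$ on the dense set and pass to all of $X$ by continuity, whereas the paper equivalently locates a single dense point where $|g|>1/3$; these are dual phrasings of the same contradiction. In part (ii) both arguments run the same counting to find an uncharged cell $C$ and then build the perturbation inside $C$ and the norming functions outside; your variant, placing $y$ with compact support in $C$ and using a single Urysohn cutoff $\phi$ that equals $1$ off a null neighbourhood of $\operatorname{supp}(y)$, is a mild streamlining of the paper's version, which instead shrinks $C$ so that $|\mu|(\overline C)=0$ and invokes inner regularity to produce compacts $K_{m,\mu}\subset X\setminus\overline C$ before applying Urysohn.
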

\begin{proof}
    $(i)$. Let $\mathscr D$ be dense in $X$. Consider the set $\{\delta_x:x\in\mathscr D\}\subset S_{C_0(X)^*}$ and suppose by contradiction that $C_0(X)$ has the $\SSDTWOP{\density(X)^+}$. Then we can find functions $\{f_x:x\in\mathscr D\}\subset B_{C_0(X)}$ and $g\in B_{C_0(X)}$ satisfying
    \begin{equation*}
        \|g\|\ge2/3,\ f_x(x)\ge2/3\text{ and }\|f_x\pm g\|\le 1.
    \end{equation*}
    Since $\mathscr D$ is dense, then we can find $x\in\mathscr D$ such that $|g(x)|>1/3$, which contradicts the fact that $|f_x(x)\pm g(x)|\le1$.

    $(ii)$. Fix $\lambda<\cellularity(X)$ and a set $\mathscr M\subset S_{C_0(X)^*}$ of cardinality $\lambda$. Find a cellular family $\mathscr C$ in $X$ of size $\lambda<|\mathscr C|\le\cellularity(X)$ and, given any $m\in\mathbb N$ and $\mu\in\mathscr M$, define
    \begin{equation*}
        \mathscr C_{m,\mu}:=\{C\in\mathscr C:|\mu|(C)>m^{-1}\}.
    \end{equation*}
    Notice that
    \begin{equation*}
        |\{\mathscr C_{m,\mu}:m\in\mathbb N\text{ and }\mu\in\mathscr M\}|\le\aleph_0\cdot\lambda<|\mathscr C|.
    \end{equation*}
    Therefore, there is $C\in\mathscr C$ satisfying $|\mu|(C)=0$ for every $\mu\in\mathscr M$. Notice that, without loss of generality, we can assume that $|\mu|(\overline{C})=0$. In fact, if that's not the case, then we can replace $C$ with some non-empty open set $C'$ satisfying $\overline{C'}\subset C$.

    Find functions $\{f_{m,\mu}:m\in\mathbb N\text{ and }\mu\in\mathscr M\}\subset S_{C_0(X)}$ such that $\mu(f_{m,\mu})\ge1-(3m)^{-1}$ and, since $\mu$'s are regular, compact sets $\{K_{m,\mu}:m\in\mathbb N\text{ and }\mu\in\mathscr M\}\subset X\setminus\overline{C}$ satisfying $|\mu|(K_{m,\mu})\ge1-(3m)^{-1}$. Now construct Urysohn's functions $\{g_{m,\mu}:m\in\mathbb N\text{ and }\mu\in\mathscr M\}$ and $h$ in $S_{C_0(X)}$ satisfying
    \begin{equation*}
        g_{m,\mu}|_{K_{m,\mu}}=1,\ g_{m,\mu}|_{\overline{C}}=0\text{ and }h|_{X\setminus C}=0.
    \end{equation*}
    Define 
    \begin{equation*}
        i_{m,\mu}:=\frac{f_{m,\mu}\cdot g_{m,\mu}}{\|f_{m,\mu}\cdot g_{m,\mu}\|}\in S_{C_0(X)}
    \end{equation*}
    and notice that $i_{m,\mu}\pm h\in S_{C_0(X)}$. Moreover, given any $m\in\mathbb N$ and $\mu\in\mathscr M$,
    \begin{align*}
        \mu(i_{m,\mu}) & \ge\int_X f_{m,\mu}\cdot g_{m,\mu}d\mu\ge\int_{K_{m,\mu}}f_{m,\mu}d\mu-(3m)^{-1}\\
        & \ge\int_X f_{m,\mu}d\mu-2\cdot(3m)^{-1}\ge1-m^{-1}.
    \end{align*}
\end{proof}

It remains unclear whether the statement of Theorem~\ref{theorem: C_0(X) SSD2P} can be written using only one cardinal function. Namely, we don't know the answer to the following two questions.

\begin{question}
    Let $X$ be a $T_4$ locally compact space. Is it true that $C_0(X)$ fails the $\SSDTWOP{\cellularity(X)^+}$? Is it true that $C_0(X)$ enjoys the $\ASSDTWOP{\density(X)}$, whenever $\density(X)>\aleph_0$?
\end{question}

\begin{example}\label{example: C_0(X) spaces with SSD2P}
    Let us now employ Theorem~\ref{theorem: C_0(X) SSD2P} to produce some new examples of spaces enjoying or failing the transfinite $\ssdtwop$.
    \begin{itemize}
        \item[(i)] Let $X$ be a separable locally compact Hausdorff space. It is clear that $\cellularity(X)\le\density(X)=\aleph_0$, hence $C_0(X)$ fails the $\SSDTWOP{\aleph_1}$.
        \item[(ii)] It is known that $\cellularity(\beta\mathbb N\setminus\mathbb N)=2^{\aleph_0}$ \cite[7.22]{KV1984_Book_HandbookOfSetTheoreticTopology}, therefore $C(\beta\mathbb N\setminus\mathbb N)$ enjoys the $\ASSDTWOP{2^{\aleph_0}}$.
        \item[(iii)] Let $B$ be a Boolean algebra and let $S(B)$ be the Stone space associated to $B$. It is clear that the set $\{\{b\}:b\in B\}\subset S(B)$ defines a cellular family in $S(B)$.
        
        Now let us consider a regular positive Borel measure $\mu$ over some T$_4$ locally compact space $X$. Call $\mathfrak B_\mu$ the set of measurable sets modulo the negligible sets in $X$. It is known that $L_\infty(\mu)$ is isometrically isomorphic to $C(S(\mathfrak B_\mu))$ (see e.g. pages~27--29 in \cite{DLS2012}), therefore we conclude that $L_\infty(\mu)$ enjoys the $\ASSDTWOP{|\mathfrak B_\mu|}$, whenever $|\mathfrak B_\mu|>\aleph_0$.

        In particular, whenever $\kappa>\aleph_0$ and $\mu$ is the counting measure over $\kappa$, $|\mathfrak B_\mu|=\kappa$, thus it follows that $\ell_\infty(\kappa)$ enjoys the $\ASSDTWOP{\kappa}$, but it fails the $\SSDTWOP{\kappa^+}$, because $\density(\ell_\infty(\kappa))=\kappa$.
    \end{itemize}
\end{example}

To conclude this section, let us provide a criterion to identify cellular families in particular classes of topological spaces, including Alexandrov-discrete spaces.

\begin{proposition}\label{proposition: criterion cellular families}
    Let $X$ be a $T_{2\frac{1}{2}}$ space and $\kappa$ an infinite cardinal. If there are $\kappa$ many points in $X$ such that every non-empty intersection of at most $\kappa$ many neighborhoods is still a neighborhood, then $\cellularity(X)\ge\kappa$.
\end{proposition}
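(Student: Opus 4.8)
The plan is to construct an explicit cellular family of cardinality $\kappa$ out of the $\kappa$ distinguished points. Write $S=\{x_\alpha:\alpha<\kappa\}$ for a set of $\kappa$ points at which every non-empty intersection of at most $\kappa$ neighborhoods is again a neighborhood. The guiding idea is that, for each $\alpha$, we can take a neighborhood of $x_\alpha$ that separates it from $x_\beta$ and then intersect these over all $\beta\neq\alpha$: the hypothesis guarantees that this intersection of $\le\kappa$ many neighborhoods is itself a neighborhood of $x_\alpha$, so its interior is an open set around $x_\alpha$, and these interiors will be pairwise disjoint by the separation axiom.

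First I would invoke the separation axiom pairwise. For each pair $\alpha\neq\beta$, the $T_{2\frac12}$ property (Hausdorffness already suffices here) provides open sets $U_{\alpha,\beta}\ni x_\alpha$ and $U_{\beta,\alpha}\ni x_\beta$ with disjoint closures, so in particular $U_{\alpha,\beta}\cap U_{\beta,\alpha}=\emptyset$. Next, for each fixed $\alpha$ I would form $W_\alpha:=\bigcap_{\beta\neq\alpha}U_{\alpha,\beta}$. Since $x_\alpha\in W_\alpha$ this intersection is non-empty, and since $|S\setminus\{x_\alpha\}|=\kappa$ it is an intersection of at most $\kappa$ neighborhoods of $x_\alpha$; hence the hypothesis applies and $W_\alpha$ is a neighborhood of $x_\alpha$. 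I would then set $O_\alpha:=\operatorname{int}(W_\alpha)$, an open set with $x_\alpha\in O_\alpha$.

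It remains to check that $\{O_\alpha:\alpha<\kappa\}$ is a cellular family of size $\kappa$. For $\alpha\neq\beta$ one has $O_\alpha\subset W_\alpha\subset U_{\alpha,\beta}$ and $O_\beta\subset W_\beta\subset U_{\beta,\alpha}$, and the two ambient sets are disjoint by construction, so $O_\alpha\cap O_\beta=\emptyset$. Each $O_\alpha$ is non-empty (it contains $x_\alpha$), and the members are pairwise distinct because $x_\alpha\in O_\alpha\setminus O_\beta$ for $\beta\neq\alpha$; thus $\alpha\mapsto O_\alpha$ is injective and the family has exactly $\kappa$ members. This is a cellular family of cardinality $\kappa$, so $\cellularity(X)\ge\kappa$.

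The step I expect to be the crux — and the only place the special assumption is used — is passing from the pairwise separations to the single neighborhood $W_\alpha$ obtained by intersecting over \emph{all} $\beta\neq\alpha$. In an arbitrary Hausdorff space one can only separate points two at a time, and $\kappa$ points need not admit $\kappa$ pairwise disjoint open neighborhoods (for instance, any $\kappa>\aleph_0$ points in $[0,1]$ cannot, since $\cellularity([0,1])=\aleph_0$). It is precisely the neighborhood-intersection hypothesis that upgrades pairwise separation into one neighborhood of $x_\alpha$ lying inside $U_{\alpha,\beta}$ simultaneously for every $\beta$, which is what makes the family globally disjoint.
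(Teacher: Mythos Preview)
Your proof is correct and follows the same core idea as the paper: use the pairwise separation axiom to produce, for each distinguished point $x_\alpha$, at most $\kappa$ separating neighborhoods, intersect them, invoke the hypothesis to obtain a single neighborhood of $x_\alpha$, and observe that the resulting family is cellular.

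There is one minor but worthwhile difference. The paper chooses for each ordered pair $(x,y)$ a \emph{closed} neighborhood $U_{x,y}$ of $x$ with $y\notin U_{x,y}$ (this is where $T_{2\frac12}$ is used), and then defines
\[
U_x=\Bigl(\bigcap_{y\ne x}U_{x,y}^{\circ}\Bigr)\cap\Bigl(\bigcap_{y\ne x}X\setminus U_{y,x}\Bigr),
\]
needing the second batch of terms because the $U_{x,y}$ and $U_{y,x}$ are not assumed disjoint. Your route is cleaner: you pick the two separating open sets for the pair $\{x_\alpha,x_\beta\}$ to be disjoint from the outset, so the single intersection $W_\alpha=\bigcap_{\beta\ne\alpha}U_{\alpha,\beta}$ already does the job, and taking interiors handles the passage to open sets. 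As you note parenthetically, this shows that the hypothesis $T_{2\frac12}$ in the statement can be weakened to $T_2$; the paper's construction genuinely uses the closed-neighborhood separation, whereas yours does not.
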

\begin{proof}
    Let $ A\subset X$ be a set of cardinality $\kappa$ such that every non-empty intersection of at most $\kappa$ many neighborhoods of $x$ is still a neighborhood for every $x\in A$. For every distinct $x,y\in A$ find a closed neighborhood $U_{x,y}$ of $x$ which doesn't contain $y$. By assumption
    \begin{equation*}
        U_x:=\left(\bigcap_{y\in A\setminus\{x\}}U_{x,y}^\circ\right)\cap\left(\bigcap_{y\in A\setminus\{x\}}X\setminus U_{y,x}\right)
    \end{equation*}
    is an open neighborhood of $x$. Notice that, given distinct $x,y\in A$ we have that
    \begin{equation*}
        U_x\cap U_y\subset U_{x,y}^\circ\cap(X\setminus U_{y,x})\cap U_{y,x}^\circ\cap(X\setminus U_{x,y})=\emptyset
    \end{equation*}
    In other words, $\{U_x:x\in A\}$ defines a cellular family of size $\kappa$.
\end{proof}

Notice that the assumption in Proposition~\ref{proposition: criterion cellular families} is far from being necessary. As a matter of fact, it is consistent with ZFC that $\beta\mathbb N\setminus\mathbb N$ contains no P-points, that is, points for which every $G_\delta$ containing them is a neighborhood, nevertheless, as already recalled in Example~\ref{example: C_0(X) spaces with SSD2P}, $\beta\mathbb N\setminus\mathbb N$ has a cellular family of cardinality $2^{\aleph_0}$.

\end{document}